\begin{document}

\restylefloat{table}
\newtheorem{thm}[equation]{Theorem}
\numberwithin{equation}{section}
\newtheorem{cor}[equation]{Corollary}
\newtheorem{expl}[equation]{Example}
\newtheorem{rmk}[equation]{Remark}
\newtheorem{conv}[equation]{Convention}
\newtheorem{claim}[equation]{Claim}
\newtheorem{lem}[equation]{Lemma}
\newtheorem{sublem}[equation]{Sublemma}
\newtheorem{conj}[equation]{Conjecture}
\newtheorem{defin}[equation]{Definition}
\newtheorem{diag}[equation]{Diagram}
\newtheorem{prop}[equation]{Proposition}
\newtheorem{notation}[equation]{Notation}
\newtheorem{tab}[equation]{Table}
\newtheorem{fig}[equation]{Figure}
\newcounter{bean}
\renewcommand{\theequation}{\thesection.\arabic{equation}}
\numberwithin{figure}{section}

\raggedbottom \voffset=-.7truein \hoffset=0truein \vsize=8truein
\hsize=6truein \textheight=8truein \textwidth=6truein
\baselineskip=18truept

\def\mapright#1{\ \smash{\mathop{\longrightarrow}\limits^{#1}}\ }
\def\mapleft#1{\smash{\mathop{\longleftarrow}\limits^{#1}}}
\def\mapup#1{\Big\uparrow\rlap{$\vcenter {\hbox {$#1$}}$}}
\def\mapdown#1{\Big\downarrow\rlap{$\vcenter {\hbox {$\ssize{#1}$}}$}}
\def\mapne#1{\nearrow\rlap{$\vcenter {\hbox {$#1$}}$}}
\def\mapse#1{\searrow\rlap{$\vcenter {\hbox {$\ssize{#1}$}}$}}
\def\mapr#1{\smash{\mathop{\rightarrow}\limits^{#1}}}
\def\ss{\smallskip}
\def\s{\sigma}
\def\l{\lambda}
\def\vp{v_1^{-1}\pi}
\def\at{{\widetilde\alpha}}

\def\sm{\wedge}
\def\la{\langle}
\def\ra{\rangle}
\def\ev{\text{ev}}
\def\od{\text{od}}
\def\on{\operatorname}
\def\ol#1{\overline{#1}{}}
\def\spin{\on{Spin}}
\def\cat{\on{cat}}
\def\Lbar{\overline{\Lambda}}
\def\qed{\quad\rule{8pt}{8pt}\bigskip}
\def\ssize{\scriptstyle}
\def\a{\alpha}
\def\bz{{\Bbb Z}}
\def\Rhat{\hat{R}}
\def\im{\on{im}}
\def\ct{\widetilde{C}}
\def\ext{\on{Ext}}
\def\sq{\on{Sq}}
\def\eps{\epsilon}
\def\ar#1{\stackrel {#1}{\rightarrow}}
\def\br{{\mathbf R}}
\def\bC{{\mathbf C}}
\def\bA{{\mathbf A}}
\def\bB{{\mathbf B}}
\def\bD{{\mathbf D}}
\def\bC{{\mathbf C}}
\def\bh{{\mathbf H}}
\def\bQ{{\mathbf Q}}
\def\bP{{\mathbf P}}
\def\bx{{\mathbf x}}
\def\bo{{\mathbf{bo}}}
\def\dh{\widehat{d}}
\def\si{\sigma}
\def\Vbar{{\overline V}}
\def\dbar{{\overline d}}
\def\wbar{{\overline w}}
\def\Sum{\sum}
\def\tfrac{\textstyle\frac}

\def\tb{\textstyle\binom}
\def\Si{\Sigma}
\def\w{\wedge}
\def\equ{\begin{equation}}
\def\b{\beta}
\def\G{\Gamma}
\def\L{\Lambda}
\def\g{\gamma}
\def\d{\delta}
\def\k{\kappa}
\def\psit{\widetilde{\Psi}}
\def\tht{\widetilde{\Theta}}
\def\psiu{{\underline{\Psi}}}
\def\thu{{\underline{\Theta}}}
\def\aee{A_{\text{ee}}}
\def\aeo{A_{\text{eo}}}
\def\aoo{A_{\text{oo}}}
\def\aoe{A_{\text{oe}}}
\def\vbar{{\overline v}}
\def\endeq{\end{equation}}
\def\sn{S^{2n+1}}
\def\zp{\mathbf Z_p}
\def\cR{{\mathcal R}}
\def\P{{\mathcal P}}
\def\cQ{{\mathcal Q}}
\def\cj{{\cal J}}
\def\zt{{\mathbf Z}_2}
\def\bs{{\mathbf s}}
\def\bof{{\mathbf f}}
\def\bq{{\mathbf Q}}
\def\be{{\mathbf e}}
\def\Hom{\on{Hom}}
\def\ker{\on{ker}}
\def\kot{\widetilde{KO}}
\def\coker{\on{coker}}
\def\da{\downarrow}
\def\colim{\operatornamewithlimits{colim}}
\def\zphat{\bz_2^\wedge}
\def\io{\iota}
\def\om{\omega}
\def\Prod{\prod}
\def\e{{\cal E}}
\def\zlt{\Z_{(2)}}
\def\exp{\on{exp}}
\def\abar{{\overline a}}
\def\xbar{{\overline x}}
\def\ybar{{\overline y}}
\def\zbar{{\overline z}}
\def\mbar{{\overline m}}
\def\nbar{{\overline n}}
\def\sbar{{\overline s}}
\def\kbar{{\overline k}}
\def\bbar{{\overline b}}
\def\et{{\widetilde E}}
\def\ni{\noindent}
\def\tsum{\textstyle \sum}
\def\coef{\on{coef}}
\def\den{\on{den}}
\def\lcm{\on{l.c.m.}}
\def\Ext{\operatorname{Ext}}
\def\iso{\approx}
\def\lra{\longrightarrow}
\def\vi{v_1^{-1}}
\def\ot{\otimes}
\def\psibar{{\overline\psi}}
\def\thbar{{\overline\theta}}
\def\mhat{{\hat m}}
\def\exc{\on{exc}}
\def\ms{\medskip}
\def\ehat{{\hat e}}
\def\etao{{\eta_{\text{od}}}}
\def\etae{{\eta_{\text{ev}}}}
\def\dirlim{\operatornamewithlimits{dirlim}}
\def\gt{\widetilde{L}}
\def\lt{\widetilde{\lambda}}
\def\st{\widetilde{s}}
\def\ft{\widetilde{f}}
\def\sgd{\on{sgd}}
\def\lfl{\lfloor}
\def\rfl{\rfloor}
\def\ord{\on{ord}}
\def\gd{{\on{gd}}}
\def\rk{{{\on{rk}}_2}}
\def\nbar{{\overline{n}}}
\def\MC{\on{MC}}
\def\lg{{\on{lg}}}
\def\cH{\mathcal{H}}
\def\cS{\mathcal{S}}
\def\cP{\mathcal{P}}
\def\N{{\Bbb N}}
\def\Z{{\Bbb Z}}
\def\Q{{\Bbb Q}}
\def\R{{\Bbb R}}
\def\C{{\Bbb C}}
\def\Lb{\overline\Lambda}
\def\mo{\on{mod}}
\def\xt{\times}
\def\notimm{\not\subseteq}
\def\Remark{\noindent{\it  Remark}}
\def\kut{\widetilde{KU}}
\def\Eb{\overline E}
\def\*#1{\mathbf{#1}}
\def\0{$\*0$}
\def\1{$\*1$}
\def\22{$(\*2,\*2)$}
\def\33{$(\*3,\*3)$}
\def\ss{\smallskip}
\def\ssum{\sum\limits}
\def\dsum{\displaystyle\sum}
\def\la{\langle}
\def\ra{\rangle}
\def\on{\operatorname}
\def\proj{\on{proj}}
\def\od{\text{od}}
\def\ev{\text{ev}}
\def\o{\on{o}}
\def\U{\on{U}}
\def\lg{\on{lg}}
\def\a{\alpha}
\def\bz{{\Bbb Z}}
\def\eps{\varepsilon}
\def\bc{{\mathbf C}}
\def\bN{{\mathbf N}}
\def\bB{{\mathbf B}}
\def\bW{{\mathbf W}}
\def\nut{\widetilde{\nu}}
\def\tfrac{\textstyle\frac}
\def\b{\beta}
\def\G{\Gamma}
\def\g{\gamma}
\def\zt{{\Bbb Z}_2}
\def\code#1{\texttt{#1}}
\def\zth{{\mathbf Z}_2^\wedge}
\def\bs{{\mathbf s}}
\def\bx{{\mathbf x}}
\def\bof{{\mathbf f}}
\def\bq{{\mathbf Q}}
\def\be{{\mathbf e}}
\def\lline{\rule{.6in}{.6pt}}
\def\xb{{\overline x}}
\def\xbar{{\overline x}}
\def\ybar{{\overline y}}
\def\zbar{{\overline z}}
\def\ebar{{\overline \be}}
\def\nbar{{\overline n}}
\def\ubar{{\overline u}}
\def\bbar{{\overline b}}
\def\et{{\widetilde e}}
\def\M{\mathcal{M}}
\def\lf{\lfloor}
\def\rf{\rfloor}
\def\ni{\noindent}
\def\ms{\medskip}
\def\Dhat{{\widehat D}}
\def\what{{\widehat w}}
\def\Yhat{{\widehat Y}}
\def\abar{{\overline{a}}}
\def\minp{\min\nolimits'}
\def\sb{{$\ssize\bullet$}}
\def\mul{\on{mul}}
\def\N{{\Bbb N}}
\def\Z{{\Bbb Z}}
\def\S{\Sigma}
\def\Q{{\Bbb Q}}
\def\R{{\Bbb R}}
\def\C{{\Bbb C}}
\def\Xb{\overline{X}}
\def\eb{\overline{e}}
\def\notint{\cancel\cap}
\def\cS{\mathcal S}
\def\cR{\mathcal R}
\def\el{\ell}
\def\TC{\on{TC}}
\def\GC{\on{GC}}
\def\wgt{\on{wgt}}
\def\Ht{\widetilde{H}}
\def\wbar{\overline w}
\def\dstyle{\displaystyle}
\def\Sq{\on{sq}}
\def\Om{\Omega}
\def\ds{\dstyle}
\def\tz{tikzpicture}
\def\zcl{\on{zcl}}
\def\bd{\mathbf{d}}
\def\cM{\mathcal{M}}
\def\io{\iota}
\def\Vb#1{{\overline{V_{#1}}}}
\def\Ebar{\overline{E}}
\def\lb{$\scriptstyle\bullet$}
\def\lb{\,\begin{picture}(-1,1)(1,-1)\circle*{4.5}\end{picture}\ }
\def\lbb{\,\begin{picture}(-1,1)(1,-1)\circle*{8}\end{picture}\ }
\def\zp{\Z_p}
\def\bL{\mathbf{L}}

\title
{Isomorphism classes of cut loci for a cube}
\author{Donald M. Davis}
\address{Department of Mathematics, Lehigh University\\Bethlehem, PA 18015, USA}
\email{dmd1@lehigh.edu}
\author{Manyi Guo}
\address{Department of Mathematics,  Lehigh University\\Bethlehem, PA 18015, USA}
\email{maga23@lehigh.edu}
\date{February 9, 2023}

\keywords{cut locus, ridge tree, geodesic, cube, star unfolding, Voronoi diagram}
\thanks {2000 {\it Mathematics Subject Classification}: 52B10, 53C22, 52C30.}

\maketitle
\begin{minipage}{5.6in}
\begin{center}
     \begin{abstract}
    
    We prove that a face of a cube can be optimally partitioned into  193 connected sets on which the cut locus, or ridge tree, is constant up to isomorphism as a labeled graph. These are 60 connected open sets, curves bounding them, and intersection points of curves. Polynomial equations for the curves are given. Sixteen pairs of sets support the same cut locus class. We present the 177 distinct cut locus classes.

\end{abstract}
\end{center}

\end{minipage}

\section{Introduction}\label{intro}
The \textit{cut locus}, or ridge tree, of a point $P$ on a convex polyhdedron is the set of points $Q$ for which there is more than one shortest path from $P$ to $Q$. Each cut locus is a tree whose leaves are corner points\footnote{We reserve the term {\it vertex} for vertices of the star unfolding and cut locus.} of the polyhedron. In \cite{star97} and \cite{cut21}, methods were developed for determining the cut locus of a point, which we describe in Section $\ref{background}$ and utilize.

The cut locus of $P$ varies continuously with  $P$ unless $P$ is a corner point of the polyhedron, but its combinatorial structure can change abruptly. We think of a cut locus as a  graph with some vertices labeled by corner points of the polyhedron. We define an equivalence relation for these labeled graphs by edge-preserving vertex bijections that preserves labels, and denote by $\textbf{L}$ the equivalence class of a cut locus $L$.

In this paper, we consider the cut loci for a cube and find a complete decomposition of a face of a cube into connected subsets on which $\textbf{L}$ is constant. The subsets are connected open sets, curves bounding these sets, and single points where the curves intersect.
These are accurately rendered in Figure \ref{figA}; Figure \ref{figB} gives an expanded version of regions in the left quadrant of \ref{figA}. We find that there are $193$ connected subsets altogether, but of these there are 16 pairs which have the same $\bL$ and so there are 177 distinct $\bL$ on a face of a cube.

\bigskip

\begin{minipage}{6in}

\begin{fig} \label{figA}
 {\bf Decomposition of a face into subsets on which $\textbf{L}$ is constant}
\\
\bigskip
\begin{center}

    \includegraphics[scale=0.25]{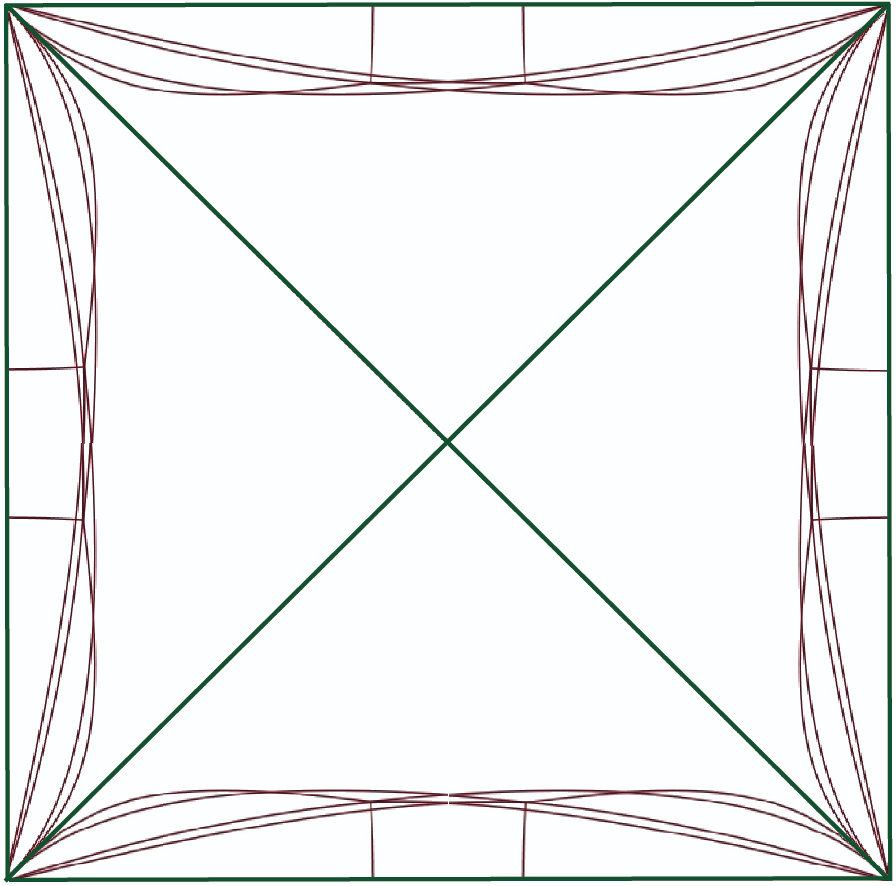}

\end{center}
\end{fig}
\end{minipage}
\\
\bigskip

In Section \ref{stmtsec}, we give a precise statement of results, including equations of the curves bounding the regions, and the labeled graphs for a representative set of $\bL$. In Section \ref{background}, we  present some preliminary information and tools from \cite{cut21} and \cite{star97} needed in our work. Section \ref{pfsec} sketches a proof that the regions and isomorphism classes of their cut loci are as described, and in Section \ref{endsec}, we complete the proof.

In Figure \ref{Cube}, we picture a  cube and a typical cut locus on it. This shows the numbering of the corner points of the cube that we will use throughout. We use the back face of that cube as the domain for our points $P$.

\bigskip

\begin{minipage}{6in}

\begin{fig} \label{Cube}

{\bf A cube with labeled corner points, and the cut locus for the middle point of an edge highlighted}
\\
\begin{center}

    \includegraphics[scale=0.18]{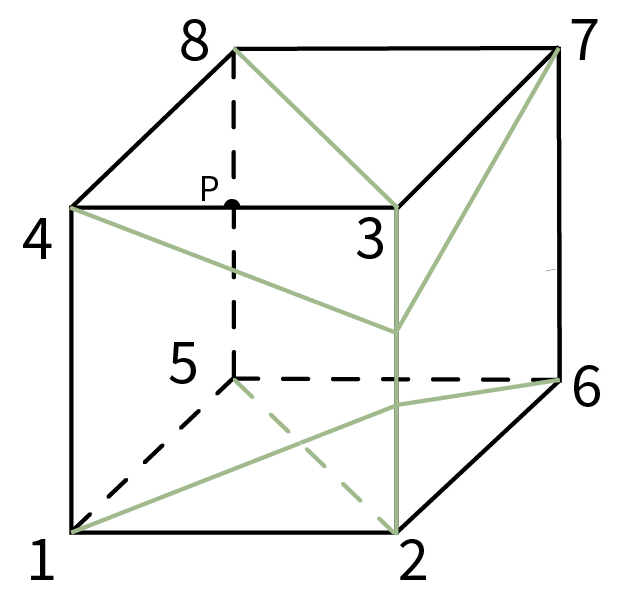}

\end{center}
\end{fig}
\end{minipage}

 \bigskip

One motivation for this work was \cite{david}, which considered geodesic motion-planning rules on a cube. Another was \cite{star97}, which considered bounds for the number of equivalence classes of cut loci on a convex polyhedron.

\section{Statement of results}\label{stmtsec}
In this section, we state our main result, an optimal decomposition of a face of a cube into 193 connected subsets on which the isomorphism class $\bL$ of the cut locus is constant, together with a depiction of these $bL$. Proofs of all claims will appear in Sections \ref{pfsec} and \ref{endsec}.
Because of the omnibus nature of our result, we do not organize it into ``Theorems.''

 We show that a face of the cube is composed of 60 connected open sets on which $\bL$ is constant, together with 48 curves which bound  these regions. Except for the boundary of the square and its diagonals, each of these curves is given by a 2-variable polynomial equation of degree 2 or 3 with integer coefficients. Some of these curves have constant $\bL$, while others are divided into two or three adjacent portions, on each of which $\bL$ is constant, yielding 96 curve portions with constant $\bL$. There are 58 distinct $\bL$'s on the regions and 86 on the curves. There are 37 points of intersection of these curves, giving 33 additional $\bL$.

We find it convenient to use $0\le x\le8$ and $-4\le y\le4$ as the coordinates of $P=(x,y)$ in our face. Figure \ref{figB} depicts the 15 open regions in the quadrant $Q_1=\{(x,y):0\le x\le4, |y|\le 4-x\}$. In Figure \ref{figB}, the $x$-axis is stretched by a factor of nearly 5 in order to better display the regions. Figure \ref{figA} depicts the whole square, illustrating how regions in the other three quadrants are copies of the regions in the quadrant $Q_1$ rotated around the center of the square. We will explain how the $\bL$ in the regions in these quadrants are obtained by permuting the corner numbers 1-8 in $\bL$.

\bigskip
\begin{minipage}{6in}
\begin{fig}\label{figB}

{\bf Regions in quadrant $Q_1$}

   \begin{center}
   
    \includegraphics[scale=0.09]{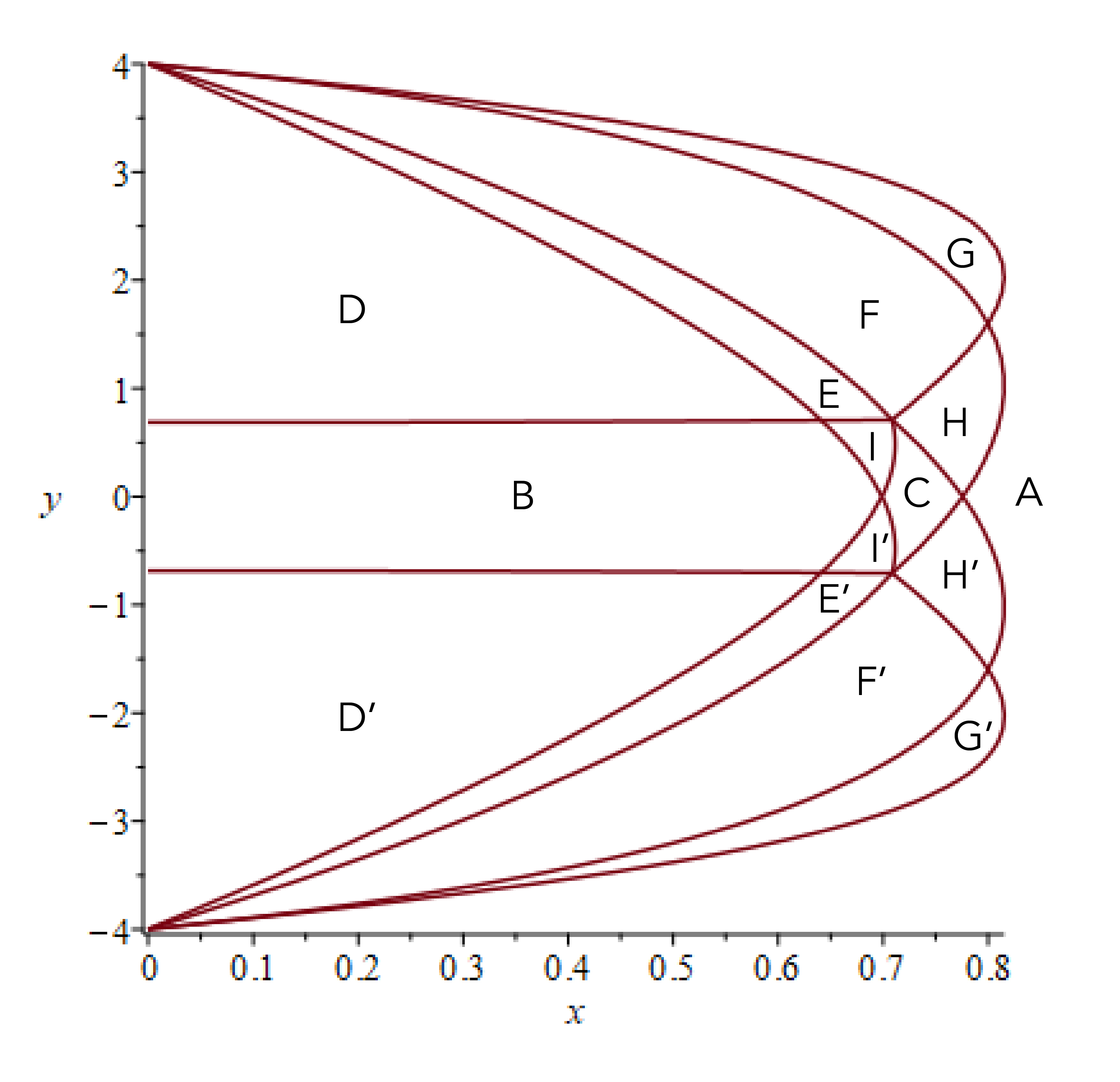}
    
\end{center}
    \end{fig}
\end{minipage}

\bigskip

%\begin{figure}
 %   \centering
  %  \includegraphics[scale=0.25]{curves.jpg}
   % \caption{}
    %\label{figB}
%\end{figure}

In Figure \ref{figC}, we present the $\bL$ for points in regions $A$-$I$ in Figure \ref{figB}. The $\bL$ in the primed regions in Figure \ref{figB} are obtained by applying the permutation $\tau=(1\ 4)(2\ 3)(5\ 8)(6\ 7)$ to the corner numbers in the $\bL$ of the corresponding unprimed region $D$-$I$. Note that the graphs which appear in Figure \ref{figC} represent isomorphism classes of labeled graphs, and so whether an edge points to the left or right is irrelevant, as is the vertical orientation of the graph.

\bigskip
\begin{minipage}{6in}
\begin{fig}\label{figC}

{\bf $\bL$ in regions.}

\begin{center}

\begin{\tz}[scale=.5]
\draw (-2,-2) -- (-1,-2) -- (0,-1) -- (1,-2) -- (2,-2);
\draw (-1,-3) -- (-1,-2);
\draw (1,-3) -- (1,-2);
\draw (0,-1) -- (0,1) -- (1,2) -- (2,2);
\draw (1,2) -- (1,3);
\draw (0,1) -- (-1,2) -- (-2,2);
\draw (-1,2) -- (-1,3);
\draw (4,-3) -- (8,-3);
\draw (6,-3) -- (6,2);
\draw (4,2) -- (8,2);
\draw (6,-2) -- (8,-2);
\draw (6,-1) -- (8,-1);
\draw (6,0) -- (8,0);
\draw (6,1) -- (8,1);
\draw (10,-3) -- (14,-3);
\draw (12,-3) -- (12,2);
\draw (10,2) -- (14,2);
\draw (12,-2) -- (14,-2);
\draw (12,-1) -- (14,-1);
\draw (12,0) -- (14,0);
\draw (12,1) -- (14,1);
\draw (16,-3) -- (20,-3);
\draw (18,-3) -- (18,2);
\draw (16,2) -- (20,2);
\draw (18,-2) -- (20,-2);
\draw (18,-1) -- (20,-1);
\draw (18,0) -- (20,0);
\draw (18,1) -- (20,1);
\draw (22,-3) -- (26,-3);
\draw (24,-3) -- (24,2);
\draw (22,2) -- (26,2);
\draw (24,-2) -- (26,-2);
\draw (24,-1) -- (26,-1);
\draw (24,0) -- (26,0);
\draw (24,1) -- (26,1);
\node at (8.2,2) {$2$};
\node at (8.2,1) {$6$};
\node at (9.8,2) {$2$};
\node at (14.2,1) {$5$};
\node at (8.2,0) {$1$};
\node at (14.2,0) {$1$};
\node at (8.2,-1) {$4$};
\node at (8.2,-2) {$7$};
\node at (8.2,-3) {$3$};
\node at (3.8,2) {$5$};
\node at (3.8,-3) {$8$};
\node at (14.2,-1) {$4$};
\node at (14.2,-2) {$8$};
\node at (14.2,-3) {$7$};
\node at (9.8,-3) {$3$};
\node at (14.2,2) {$6$};
\node at (15.8,2) {$5$};
\node at (15.8,-3) {$8$};
\node at (20.2,2) {$2$};
\node at (20.2,1) {$6$};
\node at (20.2,0) {$1$};
\node at (20.2,-1) {$7$};
\node at (20.2,-2) {$4$};
\node at (20.2,-3) {$3$};
\node at (21.8,2) {$2$};
\node at (21.8,-3) {$8$};
\node at (26.2,2) {$6$};
\node at (26.2,1) {$5$};
\node at (26.2,0) {$1$};
\node at (26.2,-1) {$7$};
\node at (26.2,-2) {$4$};
\node at (26.2,-3) {$3$};
\node at (2.2,2) {$6$};
\node at (2.2,-2) {$7$};
\node at (-2.2,2) {$1$};
\node at (-2.2,-2) {$4$};
\node at (-1,3.2) {$5$};
\node at (1,3.2) {$2$};
\node at (-1,-3.2) {$8$};
\node at (1,-3.2) {$3$};
\node at (0,3.8) {$A$};
\node at (6,3) {$B$};
\node at (12,3) {$C$};
\node at (18,3) {$D$};
\node at (24,3) {$E$};
\draw (-2,-10) -- (2,-10);
\draw (0,-10) -- (0,-5);
\draw (0,-9) -- (2,-9);
\draw (0,-8) -- (2,-8);
\draw (0,-7) -- (2,-7);
\draw (0,-6) -- (2,-6);
\draw (-2,-5) -- (2,-5);
\draw (6,-10) -- (10,-10);
\draw (8,-9) -- (10,-9);
\draw (8,-8) -- (10,-8);
\draw (8,-10) -- (8,-7) -- (9,-6) -- (10,-6);
\draw (9,-6) -- (9,-5);
\draw (8,-7) -- (7,-6) -- (6,-6);
\draw (7,-6) -- (7,-5);
\draw (14,-5) -- (18,-5);
\draw (16,-6) -- (18,-6);
\draw (16,-7) -- (18,-7);
\draw (16,-5) -- (16,-8) -- (17,-9) -- (18,-9);
\draw (17,-9) -- (17,-10);
\draw (16,-8) -- (15,-9) -- (14,-9);
\draw (15,-9) -- (15,-10);
\draw (22,-10) -- (26,-10);
\draw (24,-10) -- (24,-5);
\draw (22,-5) -- (26,-5);
\draw (24,-9) -- (26,-9);
\draw (24,-8) -- (26,-8);
\draw (24,-7) -- (26,-7);
\draw (24,-6) -- (26,-6);
\node at (-2.2,-5) {$2$};
\node at (-2.2,-10) {$4$};
\node at (2.2,-5) {$6$};
\node at (2.2,-6) {$5$};
\node at (2.2,-7) {$1$};
\node at (2.2,-8) {$7$};
\node at (2.2,-9) {$3$};
\node at (2.2,-10) {$8$};
\node at (5.8,-10) {$4$};
\node at (10.2,-10) {$8$};
\node at (10.2,-9) {$3$};
\node at (10.2,-8) {$7$};
\node at (10.2,-6) {$6$};
\node at (5.8,-6) {$1$};
\node at (9,-4.8) {$2$};
\node at (7,-4.8) {$5$};
\node at (13.8,-5) {$2$};
\node at (13.8,-9) {$4$};
\node at (18.2,-5) {$6$};
\node at (18.2,-6) {$5$};
\node at (18.2,-7) {$1$};
\node at (18.2,-9) {$7$};
\node at (15,-10) {$8$};
\node at (17,-10) {$3$};
\node at (21.8,-5) {$2$};
\node at (21.8,-10) {$8$};
\node at (26.2,-10) {$3$};
\node at (26.2,-9) {$7$};
\node at (26.2,-8) {$4$};
\node at (26.2,-7) {$1$};
\node at (26.2,-6) {$5$};
\node at (26.2,-5) {$6$};
\node at (0,-11) {$F$};
\node at (8,-11) {$G$};
\node at (16,-11) {$H$};
\node at (24,-11) {$I$};
\end{\tz}
\end{center}
\end{fig}
\end{minipage}
\bigskip

Each region $R$ in the top quadrant in Figure \ref{figA} is obtained from the corresponding region $R_0$ in quadrant $Q_1$ by a clockwise rotation of $\pi/2$ around the center of the square. The $\bL$ for $R$ is obtained from that of $R_0$ by applying the permutation $\sigma=(1\ 4\ 3\ 2)(5\ 8\ 7\ 6)$ to the corner numbers at vertices. Similarly, regions along the right edge are a $\pi$-rotation of $R_0$ and have their $\bL$ obtained using the permutation $\sigma^2=(1\ 3)(2\ 4)(5\ 7)(6\ 8)$. Finally, a clockwise rotation of $3\pi/2$ applies $\sigma^3=(1\ 2\ 3\ 4)(5\ 6\ 7\ 8)$ to the numbers at vertices of $\bL$. One can check that, for the 15 regions $R_0$ in $Q_1$, the $\bL$ for $\sigma^iR_0$, $0\le i\le 3$, are distinct except that $\s^{2+\eps}\bL_A=\s^\eps\bL_A$ for $\eps=0,1$, yielding 58 distinct $\bL$ for the regions on the face, each $\bL$ having six degree-3 vertices. The notation $\bL_A$ refers to the $\bL$ of points in the region $A$.

There are five curves and their vertical reflections which bound pairs of regions in Figure \ref{figB}. A single curve usually bounds more than one pair of regions. Its $\bL$ will be different for different pairs. In every case, the $\bL$ for the curve portion is obtained by collapsing to a point one edge of the $\bL$ for each region which it bounds.

For each curve, we list in (\ref{five}) the pairs of regions which it bounds, followed by its equation. Then in Figure \ref{curveL}, we present the $\bL$ for the various curve portions. For example, the  first curve, which appears almost horizontal in Figure \ref{figA} but is actually an arc of a circle with large radius, bounds regions B and D, and then has a short portion bounding regions E and I, and its $\bL$ for each of these portions is presented in Figure \ref{curveL}. The intersection point of these two portions has a different $\bL$, which will be described, along with its coordinates, later in this section.

\begin{align}BD,EI&\quad x^2+y^2-24y+16=0\label{five}\\
DE,BI,CI'&\quad y^3+(3x+12)y^2+(x^2+40x-16)y+3x^3-44x^2+304x-192=0\nonumber\\
EF&\quad y^3+(x-12)y^2+(x^2+8x-16)y+x^3-20x^2-240x+192=0\nonumber\\
FG,HA,CH'&\quad x^3-4x^2+(y^2+8y-80)x-4y^2+64=0\nonumber\\
GA,FH&\quad x^3-12x^2+(y^2-24y+112)x+4y^2-64=0\nonumber\end{align}

\bigskip
\begin{minipage}{6in}
\begin{fig}\label{curveL}

{\bf $\bL$ on curves.}

\begin{center}

\begin{\tz}[scale=.5]
\draw (-1.5,0) -- (1.5,0);
\draw (0,0) -- (0,4);
\draw (-1.5,1) -- (1.5,1);
\draw (0,2) -- (1.5,2);
\draw (0,3) -- (1.5,3);
\draw (-1.5,4) -- (1.5,4);
\node at (-1.7,0) {$8$};
\node at (-1.7,1) {$4$};
\node at (-1.7,4) {$5$};
\node at (1.7,0) {$3$};
\node at (1.7,1){$7$};
\node at (1.7,2) {$1$};
\node at (1.7,3) {$6$};
\node at (1.7,4) {$2$};
\draw (3.5,0) -- (6.5,0);
\draw (3.5,1) -- (6.5,1);
\draw (3.5,4) -- (6.5,4);
\draw (5,2) -- (6.5,2);
\draw (5,3) -- (6.5,3);
\draw (5,0) -- (5,4);
\node at (3.3,0) {$8$};
\node at (3.3,1) {$4$};
\node at (3.3,4) {$2$};
\node at (6.7,0) {$3$};
\node at (6.7,1) {$7$};
\node at (6.7,2) {$1$};
\node at (6.7,3) {$5$};
\node at (6.7,4) {$6$};
\node at (0,5) {$BD$};
\node at (5,5) {$EI$};
\draw (8.5,0) -- (11.5,0);
\draw (8.5,4) -- (11.5,4);
\draw (10,1) -- (11.5,1);
\draw (10,2) -- (11.5,2);
\draw (10,3) -- (11.5,3);
\draw (10,0) -- (10,4.9);
\node at (11.2,5.7) {$DE$};
\node at (8.3,0) {$8$};
\node at (8.3,4) {$5$};
\node at (11.7,0){$3$};
\node at (11.7,1) {$4$};
\node at (11.7,2) {$7$};
\node at (11.7,3) {$1$};
\node at (11.7,4) {$6$};
\node at (10,5.1) {$2$};
\draw (13.5,0) -- (16.5,0);
\draw (13.5,4) -- (16.5,4);
\draw (15,1) -- (16.5,1);
\draw (15,2) -- (16.5,2);
\draw (15,3) -- (16.5,3);
\draw (15,0) -- (15,4.9);
\node at (16.2,5.7) {$BI$};
\node at (13.3,0) {$8$};
\node at (13.3,4) {$5$};
\node at (16.7,0){$3$};
\node at (16.7,1) {$7$};
\node at (16.7,2) {$4$};
\node at (16.7,3) {$1$};
\node at (16.7,4) {$6$};
\node at (15,5.1) {$2$};
\draw (18.5,0) -- (21.5,0);
\draw (18.5,4) -- (21.5,4);
\draw (20,1) -- (21.5,1);
\draw (20,2) -- (21.5,2);
\draw (20,3) -- (21.5,3);
\draw (20,0) -- (20,4.9);
\node at (21.2,5.7) {$CI'$};
\node at (18.3,0) {$3$};
\node at (18.3,4) {$5$};
\node at (21.7,0){$7$};
\node at (21.7,1) {$8$};
\node at (21.7,2) {$4$};
\node at (21.7,3) {$1$};
\node at (21.7,4) {$6$};
\node at (20,5.1) {$2$};
\draw (23.5,0) -- (26.5,0);
\draw (25,1) -- (26.5,1);
\draw (23.5,4) -- (26.5,4);
\draw (25,2) -- (26.5,2);
\draw (25,3) -- (26.5,3);
\draw (25,-.7) -- (25,4);  
\node at (23.3,0) {$4$};
%\node at (23.3,1) {$4$};
\node at (23.3,4) {$2$};
\node at (26.7,0) {$3$};
\node at (26.7,1) {$7$};
\node at (26.7,2) {$1$};
\node at (26.7,3) {$5$};
\node at (26.7,4) {$6$};
\node at (25,-.9) {$8$};
\node at (25,5) {$EF$};
\draw (-1.5,-6) -- (1.5,-6);
\draw (-1.5,-3) -- (1.5,-3);
\draw (-1.5,-2) -- (1.5,-2);
\draw (0,-5) -- (1.5,-5);
\draw (0,-4) -- (1.5,-4);
\draw (0,-6) -- (0,-2);
\node at (-1.7,-6) {$4$};
\node at (-1.7,-3) {$1$};
\node at (-1.7,-2) {$2$};
\node at (1.7,-6) {$8$};
\node at (1.7,-5) {$3$};
\node at (1.7,-4) {$7$};
\node at (1.7,-3) {$5$};
\node at (1.7,-2) {$6$};
\node at (0,-7) {$FG$};
\draw (5,-6) -- (5,-5) -- (6,-4) -- (6,-2);
\draw (4,-5) -- (5,-5);
\draw (7,-6) -- (7,-5);
\draw (8,-5) -- (7,-5) -- (6,-4);
\draw (4.5,-3) -- (7.5,-3);
\draw (4.5,-2) -- (7.5,-2);
\node at (3.8,-5) {$4$};
\node at (8.2,-5) {$7$};
\node at (5,-6.2) {$8$};
\node at (7,-6.2) {$3$};
\node at (4.3,-3) {$1$};
\node at (4.3,-2) {$2$};
\node at (7.7,-3) {$5$};
\node at (7.7,-2) {$6$};
\node at (6,-7.2) {$HA$};
\draw (10.5,-6) -- (13.5,-6);
\draw (10.5,-3) -- (13.5,-3);
\draw (10.5,-2) -- (13.5,-2);
\draw (12,-5) -- (13.5,-5);
\draw (12,-4) -- (13.5,-4);
\draw (12,-6) -- (12,-2);
\node at (10.3,-6) {$3$};
\node at (10.3,-3) {$1$};
\node at (10.3,-2) {$2$};
\node at (13.7,-6) {$7$};
\node at (13.7,-5) {$8$};
\node at (13.7,-4) {$4$};
\node at (13.7,-3) {$5$};
\node at (13.7,-2) {$6$};
\node at (12,-7) {$CH'$};
\node at (18,-7) {$GA$};
\draw (18,-6) -- (18,-4) -- (19,-3) -- (20,-3);
\draw (16.5,-6) -- (19.5,-6);
\draw (16.5,-5) -- (19.5,-5);
\draw (19,-3) -- (19,-2);
\draw (18,-4) -- (17,-3) -- (16,-3);
\draw (17,-3) -- (17,-2);
\node at (16.3,-6) {$4$};
\node at (16.3,-5) {$3$};
\node at (19.7,-6) {$8$};
\node at (19.7,-5) {$7$};
\node at (15.8,-3) {$1$};
\node at (20.2,-3) {$6$};
\node at (17,-1.8) {$5$};
\node at (19,-1.8) {$2$};
\draw (22.5,-6) -- (25.5,-6);
\draw (24,-3) -- (25.5,-3);
\draw (22.5,-2) -- (25.5,-2);
\draw (22.5,-5) -- (25.5,-5);
\draw (24,-4) -- (25.5,-4);
\draw (24,-6) -- (24,-2);
\node at (22.3,-6) {$4$};
\node at (25.7,-4) {$1$};
\node at (22.3,-2) {$2$};
\node at (25.7,-6) {$8$};
\node at (22.3,-5) {$3$};
\node at (25.7,-5) {$7$};
\node at (25.7,-3) {$5$};
\node at (25.7,-2) {$6$};
\node at (24,-7) {$FH$};

\end{\tz}
\end{center}
\end{fig}
\end{minipage}

The vertical reflection of the curves is obtained by replacing $y$ by $-y$ in the equations, and their $\bL$ is obtained using the permutation $\tau=(1\ 4)(2\ 3)(5\ 8)(6\ 7)$, as before.
For the other three quadrants, the equations can be modified in an obvious way, and the $\bL$ obtained using the same permutations as were used for regions. One can check that, for the 11 curve segments $S$ in Figure \ref{curveL} and for $\eps=0,1$ and $0\le i\le3$, the $\bL$ for $\tau^\eps\s^iS$ are distinct except that $\tau^\eps\s^{2+i}\bL_{GA}=\tau^\eps\s^i\bL_{HA}$. This gives $88-8$ distinct $\bL$'s for curve portions. All of these $\bL$'s have five degree-3 vertices.

In addition, there are 6 more $\bL$'s, coming from the edges and half-diagonals of the square. The entire left edge of our face has  constant $\bL$, as does the half diagonal $h$ connecting the center of the face with the upper left corner. These are shown in Figure \ref{two}. These are the first cases where a corner point does not appear at a leaf, but rather at a degree-2 vertex of the cut locus. Applying the permutations $\sigma$, $\sigma^2$, and $\sigma^3$ described earlier gives the $\bL$'s on the other edges and half diagonals. However, $\s^{2+\eps}\bL_h=\s^\eps\bL_h$. Combining with those described above yields 86 distinct $\bL$'s on portions of curves.

\bigskip
\begin{minipage}{6in}
\begin{fig}\label{two}

{\bf $\bL$ on left edge and upper-left half diagonal.}

\begin{center}

\begin{\tz}[scale=.5]
\draw (-1.5,1) -- (0,1) -- (0,4) -- (-1.5,4);
\draw (-1.5,3) -- (1.5,3);
\draw (-1.5,2) -- (1.5,2);
\node at (-1.7,1) {$8$};
\node at (-1.7,2) {$4$};
\node at (-1.7,3) {$1$};
\node at (-1.7,4) {$5$};
\node at (1.7,2) {$7$};
\node at (1.7,3) {$6$};
\node at (.2,.8) {$3$};
\node at (.2,4.2) {$2$};
\draw (9,0) -- (9,1) -- (10,2.5) -- (11,4) -- (12,4);
\draw (11,0) -- (11,1) -- (12,1);
\draw (10,2.5) -- (11,1);
\draw (10,2.5) -- (9,4) -- (9,5);
\draw (9,4) -- (8,4);
\node at (9,-.2) {$8$};
\node at (11,-.2) {$3$};
\node at (12.2,1) {$7$};
\node at (12.2,4) {$6$};
\node at (7.8,4) {$1$};
\node at (9,5.2) {$5$};`
\node at (11,4.3) {$2$};
\node at (8.8,1.2) {$4$};
 \node at (-.3,-1.3) {\rm left edge};
 \node at (10,-1.3) {\rm half diagonal};
\end{\tz}
   
\end{center}
\end{fig}
\end{minipage}
\bigskip

Not including the $y$-axis, Figure \ref{figB} has eight intersection points of curves. Three below the $x$-axis are obtained by vertical reflection, and their $\bL$ is obtained using the usual permutation $\tau$. We list the other five, denoting them by the regions  abutting them, and include their coordinates. 
\begin{eqnarray*}BDEI&(0.6413,0.7045)\\
EFHCI&(0.7085,0.7085)\\
FGHA&(0.8,1.6)\\
BII'C&(0.6989,0)\\
CHH'A&(0.7757,0)\end{eqnarray*}
More precisely, the $0.7085$ is $6-2\sqrt7$, while the $0.7045$, $0.6989$, and $0.7757$ are roots of the polynomials $37y^4-816y^3+304y^2-3456y+2560$, $3x^3-44x^2+304x-192$, and $x^3-4x^2-80x+64$, respectively. The $\bL$ for the five vertices are shown in Figure \ref{pts}. The $BD$ curve intersects the $y$-axis at $(0,0.685)$, but this point does not give a new $\bL$, since $\bL$ is constant on the $y$-axis (for $|y|<4$). 

\bigskip
\begin{minipage}{6in}
\begin{fig}\label{pts}

{\bf $\bL$ for intersection points.}

\begin{center}

\begin{\tz}[scale=.5]
\draw (-1.5,0) -- (1.5,0);
\draw (-1.5,1) -- (1.5,1);
\draw (0,2) -- (1.5,2);
\draw (-1.5,3) -- (1.5,3);
\draw (0,0) -- (0,4);
\node at (-1.7,0) {$8$};
\node at (-1.7,1) {$4$};
\node at (-1.7,3) {$5$};
\node at (1.7,0) {$3$};
\node at (1.7,1) {$7$};
\node at (1.7,2) {$1$};
\node at (1.7,3) {$6$};
\node at (0,4.2) {$2$};
\node at (0,-1.2) {$BDEI$};
\draw (4.5,1) -- (7.5,1);
\draw (6,2) -- (7.5,2);
\draw (6,3) -- (7.5,3);
\draw (4.5,4) -- (7.5,4);
\draw (6,1) -- (6,4);
\draw (5,0) -- (6,1) -- (7,0);
\node at  (4.3,1) {$4$};
\node at (4.3,4) {$2$};
\node at (7.7,1) {$7$};
\node at (7.7,2) {$1$};
\node at (7.7,3) {$5$};
\node at (7.7,4) {$6$};
\node at (5,0) {$8$};
\node at (7,0) {$3$};
\node at (6,-1.2) {$EFHCI$};
\draw (10.5,0) -- (13.5,0);
\draw (10.5,1) -- (13.5,1);
\draw (10.5,2) -- (13.5,2);
\draw (10.5,3) -- (13.5,3);
\draw (12,0) -- (12,3);
\node at (10.3,0) {$4$};
\node at (10.3,1) {$3$};
\node at (10.3,2) {$1$};
\node at (10.3,3) {$2$};
\node at (13.7,0) {$8$};
\node at (13.7,1) {$7$};
\node at (13.7,2) {$5$};
\node at (13.7,3) {$6$};
\node at (12,-1.2) {$FGHA$};
\draw (22.5,0) -- (25.5,0);
\draw (22.5,1) -- (25.5,1);
\draw (22.5,2) -- (25.5,2);
\draw (22.5,3) -- (25.5,3);
\draw (24,0) -- (24,3);
\node at (22.3,0) {$3$};
\node at (22.3,1) {$4$};
\node at (22.3,2) {$1$};
\node at (22.3,3) {$2$};
\node at (25.7,0) {$7$};
\node at (25.7,1) {$8$};
\node at (25.7,2) {$5$};
\node at (25.7,3) {$6$};
\node at (24,-1.2) {$CHH'A$};
\draw (16.5,0) -- (19.5,0);
\draw (18,1) -- (19.5,1);
\draw (18,2) -- (19.5,2);
\draw (16.5,3) -- (19.5,3);
\draw (18,-.9) -- (18,4);
\node at (16.3,0) {$8$};
\node at (16.3,3) {$5$};
\node at (19.7,0) {$7$};
\node at (19.7,1) {$4$};
\node at (19.7,2) {$1$};
\node at (19.7,3) {$6$};
\node at (18,4.2) {$2$};
\node at (18,-1.1) {$3$};
\node at (20,-1.3) {$BII'C$};
\end{\tz}
    \end{center}
    \end{fig}
    \end{minipage}

    \bigskip
Including $\tau\bL$ for the first three $\bL$ in Figure \ref{pts} and applying $\s^i$, $0\le i\le3$ to all gives $32-4$ distinct $\bL$, as $\tau^\eps\s^{2+i}\bL_{FGHA}=\tau^\eps\s^i\bL_{FGHA}$.
    
Finally, there are vertices at the center of the face and at each corner. The $\bL$ for the center and the top-left corner are presented in Figure \ref{final}. Those for the other corners are obtained using the usual permutations.    
    
\bigskip
\begin{minipage}{6in}
\begin{fig}\label{final}

{\bf $\bL$ for special points.}

\begin{center}

\begin{\tz}[scale=.6]
\draw (0,0) -- (0,1) -- (2,3) -- (3,3);
\draw (3,1) -- (2,1) -- (0,3) -- (0,4);
\node at (0,-.2) {$8$};
\node at (-.3,1) {$4$};
\node at (3.2,1) {$7$};
\node at (2,.7) {$3$};
\node at (3.2,3) {$6$};
\node at (2,3.3) {$2$};
\node at (-.3,3) {$1$};
\node at (0,4.2) {$5$};
\node at (1.5,-1) {\rm center};
\draw (6,1) -- (7,2) -- (8,2);
\draw (7,1) -- (7,2) -- (6,3);
\draw (8,1) -- (7,2) -- (6,2);
\node at (6,.7) {$4$};
\node at (7,.7) {$3$};
\node at (8,.7) {$7$};
\node at (5.7,2) {$1$};
\node at (5.7,3) {$5$};
\node at (8.3,2) {$6$};
\node at (7.2,2.3) {$2$};
\node at (7,-1) {\rm corner};
\end{\tz}
    \end{center}
\end{fig}
\end{minipage}

\section{Background}\label{background}

In this section we explain how the method for finding cut loci of convex polyhedra developed in \cite{star97} and \cite{cut21} applies to a cube. This involves star unfolding and Voronoi diagrams.

We consider the cube with corner points numbered as in Figure \ref{Cube}, and let $P$ be a point on the back (5678) face. In a planar model $M$ of all faces of the  cube except the front (1234) face, choose a shortest path connecting $P$ to each corner point. These are called {\it cuts}. See Figure \ref{cuts}.

\bigskip

\begin{minipage}{6in}
\begin{fig} \label{cuts}
{\bf Example of cuts with respect to $P$}
\begin{center}
   
\begin{tikzpicture}[scale=0.7]

\draw[gray, thick] (-3,1) -- (3,1);
\draw[gray, thick] (1,-3) -- (1,3);
\draw[gray, thick] (-1,-3) -- (-1,3);
\draw[gray, thick] (-3,-1) -- (3,-1);
\draw[gray, thick] (-3,-1) -- (-3,1);
\draw[gray, thick] (3,-1) -- (3,1);
\draw[gray, thick] (-1,3) -- (1,3);
\draw[gray, thick] (-1,-3) -- (1,-3);
% \draw[draw=gray!50,latex-latex] (0,4) +(0,0.25cm) node[above right] {$y$} -- (0,-4) -- +(0,-0.5cm);
 %   \draw[draw=gray!50,latex-latex] (-4,0) +(-0.25cm,0) -- (4,0) -- +(0.25cm,0) node[above right] %{$x$};
\filldraw (-0.75,0.25) circle[radius=0.5pt];
\coordinate[label={[label distance=-3pt] $3$}] (C) at (1.25,3.25);
\coordinate[label={$2$}] (B) at (3.25,-1.5);
\coordinate[label={$1$}] (A) at (-1.25,-3.5);
\coordinate[label={[label distance=-3pt] $4$}] (D) at (-3.25,1.25);
\coordinate[label={[label distance=-3pt] $3$}] (C') at (3.25,1.25);
\coordinate[label={$2$}] (B') at (1.25,-3.5);
\coordinate[label={$1$}] (A') at (-3.25,-1.5);
\coordinate[label={ [label distance=-3pt] $4$}] (D') at (-1.25,3.25);
\coordinate[label={$P$}]  (P) at (-0.75,0.25);
\coordinate[label={$5$}]  (E) at (-1.25,-1.65);
\coordinate[label={$8$}]  (F) at (-1.25,1.25);
\coordinate[label={$6$}]  (G) at (1.25,-1.65);
\coordinate[label={$7$}]  (H) at (1.25,1.25);
\draw[gray,thin] (-3,1) -- (P);
\draw[gray,thin] (-3,-1) -- (P);
\draw[gray,thin] (1,3) -- (P);
\draw[gray,thin] (1,-1) -- (P);
\draw[gray,thin] (-1,1) -- (P);
\draw[gray,thin] (1,1) -- (P);
\draw[gray,thin] (-1,-1) -- (P);
\draw[gray,thin] (1,-3) -- (P);
\end{tikzpicture}

\end{center}
\end{fig}
\end{minipage}
\bigskip

These decompose $M$ as the union of eight polygons, with $P$ at a vertex of each, and edges 12, 23, 34, 41 and 15, 26, 37, and 48 at  far ends of the polygons. The {\it star unfolding} of $P$ is obtained by first gluing to the 1234 square the polygons with far edges 12, 23, 34, and 41. This will expose new edges 15, 26, 37, and 48, and we then glue the other four polygons to the corresponding edges. See Figure \ref{star}.
This yields a polygon with eight vertices corresponding to corner points of the cube, and eight corresponding to occurrences of the point $P$, which we number as in Figure \ref{star}. This is the star unfolding, $S$, of the point $P$.

\bigskip
\begin{minipage}{6in}
\begin{fig}\label{star}
{\bf A star unfolding $S$}
\begin{center}
    \begin{tikzpicture}[scale=0.7]
    \draw[gray, thick] (1,1) -- (-1,1);
    \draw[gray, thick] (-1,1) -- (-1,-1);
    \draw[gray, thick] (1,-1) -- (-1,-1);
    \draw[gray, thick] (1,1) -- (1,-1);
     \draw[gray, thick] (-1,1) -- (-2.25,3.25);
     \draw[gray, thick] (-1,1) -- (-3.25,-0.25);
     \draw[gray, thick] (-3.25,-0.25)--(-1,-1);
     \draw[gray, thick] (-1,-1)--(-1.75,-3.25);
      \draw[gray, thick] (-2.25,3.25)--(-1,3);
      \draw[gray, thick] (-1,3)--(-0.75,4.25);
      \draw[gray, thick] (-0.75,4.25)--(1,1);
      \draw [gray, thick,dash dot] (-1,-3) -- (-1,-1);
      \draw [gray, thick,dash dot] (-1,3) -- (-1,1);
       \draw[gray, thick] (-1.75,-3.25)--(-1,-3);
      \draw[gray, thick] (-1,-3)--(-0.75,-3.75);
      \draw[gray, thick] (-0.75,-3.75)--(1,-1);
      \draw[gray, thick] (1,-1)--(3.75,-2.75);
       \draw[gray, thick] (3.75,-2.75)--(3,-1);
       \draw[gray, thick] (3,-1)--(4.75,-0.25);
       \draw[gray, thick] (3,1)--(4.75,-0.25);
       \draw [gray, thick,dash dot] (1,1) -- (3,1);
       
       \draw [gray, thick,dash dot] (1,-1) -- (3,-1);
       \draw[gray, thick] (3,1)--(4.25,2.75);
       \draw[gray, thick] (4.25,2.75)--(1,1);
       \coordinate[label={$1$}]  (1) at (-1.5,0.75);
       \coordinate[label={$2$}]  (2) at (1.25,1.25);
       \coordinate[label={$3$}]  (3) at (1,-1.75);
       \coordinate[label={$4$}]  (4) at (-1.5,-1.5);
       \coordinate[label={$5$}]  (5) at (-1.25,3.25);
       \coordinate[label={$6$}]  (6) at (3.5,0.75);
       \coordinate[label={$7$}]  (7) at (3.5,-1.5);
       \coordinate[label={$8$}]  (8) at (-1.25,-3.75);
       \coordinate[label={$P_1$}]  (p_1) at (-3.75,-0.75);
       \coordinate[label={$P_2$}]  (p_2) at (-2.75,3);
       \coordinate[label={$P_3$}]  (p_3) at (-0.75,4.15);
       \coordinate[label={$P_4$}]  (p_4) at (4.5,2.75);
        \coordinate[label={$P_5$}]  (p_5) at (5.25,-0.75);
         \coordinate[label={$P_6$}]  (p_6) at (4,-3.5);
          \coordinate[label={$P_7$}]  (p_7) at (-0.75,-4.75);
           \coordinate[label={$P_8$}]  (p_3) at (-2.25,-3.5);
            %\draw[PineGreen, very thick](0.572,0.643)--(-1,1);
         %\draw[PineGreen, very thick](0.572,0.643)--(-1,3);
         %\draw[PineGreen, very thick](0.333,-0.333)--(-1,-3);
         %\draw[PineGreen, very thick](0.333,-0.333)--(-1,-1);
         %\draw[PineGreen, very thick](0.895,0.649)--(1,1);
         % \draw[PineGreen, very thick](0.895,0.649)--(3,1);
          %\draw[PineGreen, very thick](0.805,-0.122)--(3,-1);
          %\draw[PineGreen, very thick](0.805,-0.122)--(1,-1);
          %\draw[PineGreen, very thick](0.805,-0.122)--(0.793,-0.1033);
          %\draw[PineGreen, very thick](0.333,-0.333)--(0.793,-0.1033);
          %\draw[PineGreen, very thick](0.572,0.643)--(0.75,0.4722);
           %\draw[PineGreen, very thick](0.793,-0.1033)--(0.75,0.4722);
           %\draw[PineGreen, very thick](0.895,0.649)--(0.75,0.4722);
\end{tikzpicture}
\end{center}
\end{fig}
\end{minipage}

\bigskip
Recall that our coordinates for the 5678 face of the cube are $0\le x\le 8$ and $-4\le y\le 4$. We will initially consider points $P$ in the quadrant $Q_1$ given by $-4\le y\le 4$ and $0\le x\le 4-|y|$. Points in other quadrants will be considered later by rotating the cube.

We use $(v,w)$ as the coordinate system for the plane containing $S$, with $(0,0)$ at the midpoint of segment 2-3 in Figure \ref{star}, and sides of the two squares having length 8. The coordinates of the points labeled 1-8 are, respectively, $(-8,4)$, $(0,4)$, $(0,-4)$, $(-8,-4)$, $(-8,12)$, $(8,4)$, $(8,-4)$, and $(-8,-12)$. The coordinates
$(v_\a,w_\a)$ of the points $P_\a$ are as in (\ref{eqsvw}).

\begin{align}
     P_1 &= (-16-x,-y), &P_5&=(16-x,-y),\nonumber\\
P_2 &=(-12-y,12+x), &P_6&=(12-y,-12+x),\label{eqsvw}\\
P_3&=(-8+x,16+y), &P_7&=(-8+x,-16+y),\nonumber\\
P_4 &= (12+y,12-x), &P_8&=(-12+y,-12-x).\nonumber
\end{align}

For each point $P_\a$, $1\le\a\le8$, its {\it Voronoi cell} $C_\a$ is the set of points $Q$ in $S$ such that 
$$d(Q,P_\a)\le d(Q,P_\b)$$
for $1\le\b\le8$. The points of $S$ which lie in more than one $C_\a$ comprise the cut locus $L_P$ of $P$. In Figure \ref{cutloc}, we show the Voronoi cells and cut locus of the point $P$ in Figure \ref{cuts}.
\bigskip

\begin{minipage}{6in}
\begin{fig} \label{cutloc}
{\bf Voronoi cells and cut locus of $P$}
\begin{center}

    \begin{tikzpicture}[scale=0.7]
    \draw[gray, thick] (1,1) -- (-1,1);
    \draw[gray, thick] (-1,1) -- (-1,-1);
    \draw[gray, thick] (1,-1) -- (-1,-1);
    \draw[gray, thick] (1,1) -- (1,-1);
     \draw[gray, thick] (-1,1) -- (-2.25,3.25);
     \draw[gray, thick] (-1,1) -- (-3.25,-0.25);
     \draw[gray, thick] (-3.25,-0.25)--(-1,-1);
     \draw[gray, thick] (-1,-1)--(-1.75,-3.25);
      \draw[gray, thick] (-2.25,3.25)--(-1,3);
      \draw[gray, thick] (-1,3)--(-0.75,4.25);
      \draw[gray, thick] (-0.75,4.25)--(1,1);
      \draw [gray, thick,dash dot] (-1,-3) -- (-1,-1);
      \draw [gray, thick,dash dot] (-1,3) -- (-1,1);
       \draw[gray, thick] (-1.75,-3.25)--(-1,-3);
      \draw[gray, thick] (-1,-3)--(-0.75,-3.75);
      \draw[gray, thick] (-0.75,-3.75)--(1,-1);
      \draw[gray, thick] (1,-1)--(3.75,-2.75);
       \draw[gray, thick] (3.75,-2.75)--(3,-1);
       \draw[gray, thick] (3,-1)--(4.75,-0.25);
       \draw[gray, thick] (3,1)--(4.75,-0.25);
       \draw [gray, thick,dash dot] (1,1) -- (3,1);
       \draw [gray, thick,dash dot] (1,-1) -- (3,-1);
       \draw[gray, thick] (3,1)--(4.25,2.75);
       \draw[gray, thick] (4.25,2.75)--(1,1);
       \coordinate[label={$1$}]  (1) at (-1.5,0.75);
       \coordinate[label={$2$}]  (2) at (1.25,1.25);
       \coordinate[label={$3$}]  (2) at (1,-1.75);
       \coordinate[label={$4$}]  (3) at (-1.5,-1.5);
       \coordinate[label={$5$}]  (5) at (-1.25,3.25);
       \coordinate[label={$6$}]  (6) at (3.5,0.75);
       \coordinate[label={$7$}]  (7) at (3.5,-1.5);
       \coordinate[label={$8$}]  (8) at (-1.25,-3.75);
       \coordinate[label={$P_1$}]  (p_1) at (-3.75,-0.75);
       \coordinate[label={$P_2$}]  (p_2) at (-2.75,3);
       \coordinate[label={$P_3$}]  (p_3) at (-0.75,4.15);
       \coordinate[label={$P_4$}]  (p_4) at (4.5,2.75);
        \coordinate[label={$P_5$}]  (p_5) at (5.25,-0.75);
         \coordinate[label={$P_6$}]  (p_6) at (4,-3.5);
          \coordinate[label={$P_7$}]  (p_7) at (-0.75,-4.75);
           \coordinate[label={$P_8$}]  (p_3) at (-2.25,-3.5);
            \draw[red, very thick](0.647,.529)--(-1,1);
         \draw[red, very thick](0.647,0.529)--(-1,3);
         \draw[red, very thick](0.333,-0.333)--(-1,-3);
         \draw[red, very thick](0.333,-0.333)--(-1,-1);
         \draw[red, very thick](0.895,0.649)--(1,1);
          \draw[red, very thick](0.895,0.649)--(3,1);
          \draw[red, very thick](0.805,-0.122)--(3,-1);
          \draw[red, very thick](0.805,-0.122)--(1,-1);
          \draw[red, very thick](0.805,-0.122)--(0.75,-0.0357);
          \draw[red, very thick](0.333,-0.333)--(0.75,-0.0357);
          \draw[red, very thick](0.647,0.529)--(0.75,0.4722);
           \draw[red, very thick](0.75,-0.0357)--(0.75,0.4722);
           \draw[red, very thick](0.895,0.649)--(0.75,0.4722);
\end{tikzpicture}
\end{center}
\end{fig}
\end{minipage}
\bigskip

All segments in the cut locus are portions of perpendicular bisectors $\perp_{\a,\b}$ of the segments joining $P_\a$ and $P_\b$. One needs to consider how various $\perp_{\a,\g}$ intersect to decide when a portion of $\perp_{\a,\b}$ is closer to $P_\a$ and $P_\b$ than to other $P_\g$'s. An example of this is discussed in Section \ref{maplesec}.

\section{Determination of a cut locus}\label{maplesec}

We used {\tt Maple} to help us find the cut locus for many points in  quadrant $Q_1$. We illustrate here with the top half of the cut locus of the point $P=(x,y)=(1.5,0.5)$. Substituting these values into the equations (\ref{eqsvw}), we obtain the coordinates of the points $P_\a=(v_\a,w_\a)$ for $1\le \a\le 8$. The equation of the perpendicular bisector, $\perp_{\a,\b}$, of the segment connecting points $P_\a$ and $P_\b$ is
\begin{equation}\label{perpeq}\begin{cases}w=\dfrac{w_\a+w_\b}2+\dfrac{v_\a-v_\b}{w_\b-w_\a}\bigl(v-\dfrac{v_\a+v_\b}2\bigr)&\{\a,\b\}\ne\{1,5\}\\
v=-x&\{\a,\b\}=\{1,5\}.\end{cases}\end{equation}

{\tt Maple} plots a selected batch of these lines $\perp_{\a,\b}$ in a specified grid. The grid in Figure \ref{maple} is $[-3.5,0.5]\times[0,3]$. Here we have included just those relevant for the top half of the cut locus, which appears in red. Other lines such as $\perp_{2,4}$ and $\perp_{2,5}$ would usually be considered for possible relevance. Trying to do this sort of  analysis for the top and bottom halves of the cut locus together leads to an unwieldy collection of perpendicular bisectors. In Section \ref{endsec}, we show that it suffices to consider the top and bottom parts separately. When crucial intersection points are very close together, we can change the grid to effectively zoom in.

\bigskip
\begin{minipage}{6in}
\begin{fig}\label{maple}

{\bf Finding a cut locus.}

\begin{center}

\begin{\tz}[scale=2.4]
\draw [color=red] [ultra thick] (0,0) -- (0,1.206) -- (1.0127,2.4569) -- (2,2.636);
\draw [color=red] [ultra thick] (1.0127,2.4569) -- (1.184,3);
\draw [color=red] [ultra thick] (0,1.206) -- (-1.6304,2.2608) -- (-3,2.75);
\draw [color=red] [ultra thick] (-1.6304,2.2608) -- (-2,3);
\draw (1.864,0) -- (0,1.206) --  (0,3);
\draw (2,.964) -- (-1.6304,2.2608) -- (-3,3.147);
\draw (-2,1.91) -- (1.0127,2.4569) -- (1.452,3);
%\draw (-.87,0) -- (.685,3);
\draw (-.976,0) -- (0,1.206);
\draw (-.5,0) -- (-1.6304,2.2608);
%\draw (.06,0) -- (.42,3);
\draw (.237,0) -- (1.0127,2.4569);
\node at (-2.04,2.9) {$2$};
\node at (-1.88,2.9) {$3$};
\node at (-3,2.82) {$2$};
\node at (-3,2.68) {$1$};
\node at (-.75,1.77) {$3$};
\node at (-.84,1.62) {$1$};
\node at (1.95, 2.7) {$4$};
\node at (1.95,2.49) {$5$};
\node at (1.1,2.9) {$3$};
\node at (1.22,2.9) {$4$};
\node at (.55,2) {$3$};
\node at (.64,1.91) {$5$};
\node at (-.08,.7) {$1$};
\node at (.08,.7) {$5$};

\end{\tz}
\end{center}
\end{fig}
\end{minipage}

\bigskip
Points equidistant from $P_\a$ and $P_\b$ lie on $\perp_{\a,\b}$. In Figure \ref{maple}, the line $\perp_{\a,\b}$ is annotated with $\a$ on one side and $\b$ on the other, indicating the side closer to $P_\a$ or $P_\b$. The Voronoi cell for a point $P_\a$ is bounded by portions of lines $\perp_{\a,\b}$ for various $\b$, with $\a$ on the cell side of each $\perp_{\a,\b}$. For example, the Voronoi cell for $P_3$ is bounded by portions of $\perp_{3,2}$, $\perp_{3,1}$, $\perp_{3,5}$, and $\perp_{3,4}$, reading from left to right in Figure \ref{maple}.

Although we use the various $\perp_{\a,\b}$ to determine the cut loci, the eventual description of the cut locus is in terms of the corner points of the cube at certain vertices of the cut locus. As seen in Figure \ref{star}, the corner points on lines $\perp_{1,2}$, $\perp_{2,3}$, $\perp_{3,4}$, and $\perp_{4,5}$ are 1, 5, 2, and 6, respectively, and so the top half of the cut locus of the point $P=(1.5,0.5)$ is as depicted in Figure \ref{toph}.

\bigskip
\begin{minipage}{6in}
\begin{fig}\label{toph}

{\bf Top half of a cut locus.}

\begin{center}

\begin{\tz}[scale=.8]
\draw (0,1) -- (0,2) -- (1,3) -- (2.5,3.5);
\draw (1,3) -- (1,4);
\draw (0,2) -- (-1,3) -- (-1,4);
\draw (-1,3) -- (-2.5,3.5);
\node at (-2.7,3.5) {$1$};
\node at (-1,4.2) {$5$};
\node at (1,4.2) {$2$};
\node at (2.7,3.5) {$6$};
\end{\tz}
\end{center}
\end{fig}
\end{minipage}

\bigskip
\section{Proofs}\label{pfsec}
In this section, we show how the regions and curves and their cut loci are obtained.

The coordinate systems are as described in Section \ref{background}.  Let $[8]=\{1,2,3,4,5,6,7,8\}$.
For $P=(x,y)\in Q_1$ and $\a\in [8]$, let $P_\a$ be the point in the star unfolding described earlier. Its coordinates $(v_\a,w_\a)$ are linear expressions, (\ref{eqsvw}), in $x$ and $y$.
In Figure \ref{star}, we depict a typical star unfolding. The vertices $P_1,\ldots,P_8$ are the focal points for the Voronoi cells, while the vertices with numbered labels correspond to the corner points of the cube. 

For $\a,\b\in[8]$, let $\perp_{\a,\b}\!(P)$ denote the perpendicular bisector of the segment connecting $P_\a$ and $P_\b$. Its equation is (\ref{perpeq}).
Note that $\perp_{\a,\a+1}$ has as its extreme point in the star unfolding the corner point 1, 5, 2, 6, 7, 3, 8, and 4, for $\a=1,\ldots,8$. Although our results about $\bL$ are described in terms of the corner points, our work is done in terms of the $\a$.

For $S=\{\a,\b,\g\}\subset[8]$, let $\pi_S(P)$ denote the intersection of $\perp_{\a,\b}\!(P)$ and $\perp_{\b,\g}\!(P)$ (and $\perp_{\a,\g}\!(P)$, as $\pi_S(P)$ is the center of the circle passing through $P_\a$, $P_\b$, and $P_\g$.). Let $L_P$ denote the cut locus of $P$. It is formed from
portions of various $\perp_{\a,\b}(P)$  which are closer to $P_\a$ and $P_\b$ than to any other $P_\g$. The degree-3 vertices of $L_P$ are certain $\pi_S(P)$. From now on, we will usually omit the $(P)$ and the set symbols in subscripts.

A transition from one isomorphism class of $L_P$ to another as $P$ varies will occur when $\pi_{\a,\b,\g}$ passes through another $\perp_{\b,\d}$. This is illustrated in Figure \ref{trans}. The references there to $t$ and $t_0$ will be used in Section \ref{endsec}. In the left side of the figure, $\pi_{\a,\b,\g}$ is part of $L_P$ since it is closer to $P_\a$, $P_\b$, and $P_\g$ than to any other $P$-point, but as $P$ changes and $\pi_{\a,\b,\g}$ moves across $\perp_{\b,\d}$, it is now closer to $P_\d$, and so is not part of $L_P$.

\bigskip
\begin{minipage}{6in}
\begin{fig}\label{trans}

{\bf Transition.}

\begin{center}

\begin{\tz}[scale=.4]
\draw [color=red] [ultra thick] (2,-8) -- (0,-4) -- (0,1) -- (6,4);
\draw [color=red] [ultra thick] (0,1) -- (-6,4);
\draw [color=red] [ultra thick] (0,-4) -- (-2,-8);
\node at (1.8,-4) {$\pi_{\a,\b,\g}$};
\draw (-5,6) -- (0,-4) -- (5,6);
\draw (-6,0) -- (6,0);
\draw (-6,-2) -- (0,1) -- (6,-2);
\draw (0,1) -- (0,6);
\draw (0,-4) -- (0,-8);
\node at (.4,-2) {$\a$};
\node at (-.4,-2) {$\g$};
\node at (-1.9,-7) {$\g$};
\node at (-1.1,-7) {$\b$};
\node at (1.1,-7) {$\b$};
\node at (1.9,-7) {$\a$};
\node at (6,4.4) {$\d$};
\node at (6,3.6) {$\a$};
\node at (5,.4) {$\d$};
\node at (5,-.4) {$\b$};
\node at (-5.5,4) {$\d$};
\node at (-6,3.3) {$\g$};
\node at (0,-9) {$t<t_0$};
\node at (12,-9) {$t=t_0$};
\node at (24,-9) {$t>t_0$};
\draw [color=red] [ultra thick] (9,-6) -- (12,0) -- (15,-6);
\draw [color=red] [ultra thick] (7,2.5) -- (12,0) -- (17,2.5);
\draw (7,0) -- (17,0);
\draw (9,6) -- (12,0) -- (15,6);
\draw (12,-6) -- (12,6);
\draw (7,-2.5) -- (12,0) -- (17,-2.5);
\node at (8.6,-6) {$\g$};
\node at (9.4,-6) {$\b$};
\node at (11.6,-5) {$\g$};
\node at (12.4,-5) {$\a$};
\node at (14.5,-6) {$\b$};
\node at (15.4,-5.8) {$\a$};
\node at (7,2) {$\g$};
\node at (7,2.9) {$\d$};
\node at (17,2.1) {$\a$};
\node at (17,2.9) {$\d$};
\node at (17,-.4) {$\b$};
\node at (17,.4) {$\d$};
\draw [color=red] [ultra thick] (19,-6) -- (22,0) -- (26,0) -- (29,-6);
\draw [color=red] [ultra thick] (19,1.5) -- (22,0) -- (26,0) -- (30,2);
\draw (26,0) -- (30,0);
\draw (19,0) -- (22,0);
\draw (24,-6)  -- (24,6);
\draw (22,0) -- (25,6);
\draw (26,0) -- (23,6);
\draw (19,-3.5) -- (26,0);
\node at (26,4) {$\pi_{\a,\b,\g}$};
\draw (22,0) -- (30,-4);
\node at (23.6,-4) {$\g$};
\node at (24.4,-4) {$\a$};
\node at (24.5,.4) {$\d$};
\node at (24.5,-.4) {$\b$};
\node at (28.5,-6) {$\b$};
\node at (29.4,-5.8) {$\a$};
\node at (18.6,-6) {$\g$};
\node at (19.4,-6) {$\b$};
\node at (28,1.4) {$\d$};
\node at (28,.6) {$\a$};
\node at (19,1.9) {$\d$};
\node at (19,1) {$\g$};
\end{\tz}
\end{center}
\end{fig}
\end{minipage}

\bigskip
\ni We will show in Section \ref{endsec} that this type of transition is the only way to change from one $\bL$ to another.

We  assume first that $\{\a,\b,\g\}$ does not contain both 1 and 5.
Then the $v$-coordinate of $\pi_{\a,\b,\g}$ is found by equating the right hand side of (\ref{perpeq}) for $(\a,\b)$ and  $(\b,\g)$, using the formulas for $v_\a$, $w_\a$, etc.,
in terms of $x$ and $y$ given in (\ref{eqsvw}). This  yields a formula for $v=v_{\a,\b,\g}$ in terms of $x$ and $y$. 

The relationship between $x$ and $y$ such that $\pi_{\a,\b,\g}$ and $\pi_{\a,\b,\d}$ coincide (and hence a transition might occur) is the equation $v_{\a,\b,\g}=v_{\a,\b,\d}$. This yields a fourth-degree equation. We let {\tt Maple}
do the work. For example, we find  the equation for  $(\a,\b,\g,\d)=(2,3,4,5)$ as follows.

\begin{verbatim}v[1]:=-16-x: w[1]:=-y: v[2]:=-12-y: w[2]:=12+x: v[3]:=-8+x: w[3]:=16+y: 
v[4]:=12+y: w[4]:=12-x: v[5]:=16-x: w[5]:=-y:

a:=2: b:=3: c:=4: d:=5:

A:=solve((w[a]+w[b])/2+(v[a]-v[b])/(w[b]-w[a])(v-(v[a]+v[b])/2)
=(w[a]+w[c])/2+(v[a]-v[c])/(w[c]-w[a])(v-(v[a]+v[c])/2),v):

B:=solve((w[a]+w[b])/2+(v[a]-v[b])/(w[b]-w[a])(v-(v[a]+v[b])/2)
=(w[a]+w[d])/2+(v[a]-v[d])/(w[d]-w[a])(v-(v[a]+v[d])/2),v):

simplify(numer(A)denom(B)-numer(B)denom(A))
\end{verbatim}

\medskip

\ni Note that the expressions for $A$ and $B$ will be rational expressions, and so the last line gives a polynomial which equals 0.

For $(a,b,c,d)=(2,3,4,5)$, it yields $$(y^3+(3x+12)y^2+(x^2+40x-16)y+3x^3-44x^2+304x-192)(4+y-x)(=0).$$
\ni The cubic factor  is the second of the five equations listed in (\ref{five}). If we use $(a,b,c,d)=(1,2,3,4)$, we obtain the vertical reflection of the $EF$ curve of (\ref{five}).

For $\{1,\b,\g,5\}$,  since $\perp_{1,5}$ is the line $v=-x$, we do A above for 1, b, and c, omit B, and simplify (numer(A)+x$\cdot$denom(A)). This yields (beginning a practice of often omitting commas)
\begin{align}
1235&\quad x^3-4x^2+(y^2+8y-80)x-4y^2+64(=0)\nonumber\\
1245&\quad x(x^2+y^2+24y+16)(=0)\label{15}\\
1345&\quad x^3-12x^2+(y^2+24y+112)x+4y^2-64(=0).\nonumber\end{align}

For each of these five cases, if 2, 3, and 4 are replaced by 8, 7, and 6, respectively, the equation is obtained by replacing $y$ by $-y$. Altogether we have ten equations. Compare with equations (\ref{five}).

We describe $\bL_P$ for $P$ in the top half of $Q_1$ by the sets $S$ for which $\pi_S$ is a degree-3 vertex of $\bL_P$. Later in this section we will explain how we translate this description to the description involving corner points of the cube, which appeared in Section \ref{stmtsec}. For example, the case $P=(1.5,0.5)$ considered in Section \ref{maplesec} has $\pi_S$ for $S=123$, $135$, and $345$ in its top half. {\tt Maple} plotting of perpendicular bisectors shows that the bottom half of this $\bL_P$ is essentially a flip of the top half, so has $\pi_S$ for $S=178$, $157$, and $567$.

In Section \ref{endsec}, we show that the only possible transitions from one $\bL$ to another are of the type illustrated in Figure \ref{trans}, where an $\a\b\g\d$ intersection bounds one region whose $\bL$ has $\pi_{\a,\b,\g}$ and $\pi_{\a,\g,\d}$ vertices and another with $\pi_{\a,\b,\d}$ and $\pi_{\b,\g,\d}$. The point is that the 4-set\footnote{We use this to denote a set with 4 elements.} defining the bounding curve must have two 3-subsets in each of the regions on either side of it. So, for example, the 1568 curve could not bound a region containing $\bL_{(1.5,0.5)}$ because there are not two of the six 3-sets $S$ for $\bL_{(1.5,0.5)}$ listed in the previous paragraph which are contained in $\{1,5,6,8\}$.

Of the ten equations determined above, all except the ones corresponding to 1568 and 1245 intersect the top half of $Q_1$ in a curve which we denote as $x=\theta_{\a\b\g\d}(y)$ for $0\le y\le4$. Each $y$ has three $x$ values as solutions, but we neglect those that are complex or outside the region $0\le x\le 4-y$. The equation for 1245 does not intersect this region, and the one for 1568 does so only for $0.685\le y\le 1.07$.

{\tt Maple} shows that, for $1.6<y<4$,
$$\theta_{1345}(y)<\theta_{2345}(y)<\theta_{1578}(y)<\theta_{1678}(y)<\theta_{5678}(y)<\theta_{1234}(y)<\theta_{1235}(y)<\theta_{1567}(y),$$
and that for $0\le y\le 4$ all eight of these curves satisfy $0\le\theta_{\a\b\g\d}(y)\le 0.83$.
For $P$ in quadrant $Q_1$,  $\bL_P$ has the type of the case $P=(1.5,0.5)$ considered above, with degree-3 vertices corresponding to $S=123$, $135$, $345$, $178$, $157$, and $567$, until a transition occurs. This will define region $A$ in Figure \ref{figB}.

Now let $1.6<y<4$. Since there are no $\a\b\g\d$ intersections of the eight types in the above string of inequalities in the region $\mathcal R=\{(x,y):0\le y\le4,\,0.83\le x<4-y\}$, and, as noted above, a 1568 intersection cannot affect $\bL_{(1.5,0.5)}$, we conclude that for all $(x,y)$ in $\mathcal R$, $\bL_{(x,y)}=\bL_{(1.5,0.5)}$, with degree-3 vertices 123, 135, 345, 178, 157, and 567. For this, we also need an observation in Section \ref{endsec} that no other $\a\b\g\d$ can have an effect.

As we move from the right, when the point $P=(\theta_{1567}(y),y)$ is encountered, there is a transition from $157$ and $567$ to $156$ and $167$. This is region $G$, with $123$, $135$, $345$, $178$, $156$, and $167$.

Next we encounter $P=(\theta_{1235}(y),y)$, and this causes a transition to $125$, $235$, $345$, $178$, $156$, and $167$. This is region $F$. The next two potential transitions at $1234$ and $5678$ do not effect a change, because neither of these 4-sets contain two 3-subsets which are vertices of region-$F$ cut loci. Next at $P=(\theta_{1678}(y),y)$ we have a transition, changing $178$ and $167$, leading to region $E$ described by $125$, $235$, $345$, $168$, $156$, and $678$. The next potential transition,   $1578$, does not effect a change, but then $2345$ does, to $125$, $234$, $245$, $168$, $156$, and $678$ in region $D$.
Finally, $1345$ does not effect a change because it does not have two 3-sets of region $D$.

Before we discuss other ranges of values of $y$, we point out that when a curve is crossed, it gives a degree-4 vertex of the cut locus, as shown in the middle part of Figure \ref{trans}. Thus, for points $P$ on the curve $GA$ separating regions $G$ and $A$, $\bL_P$ has vertices abutting regions $123$, $135$, $345$, $178$, and $1567$ of the star unfolding, and similarly for points on the other curves crossed in the above analysis. We also note that $\theta_{1567}(1.6)=0.8=\theta_{1235}(1.6)$.

The same procedure is followed for other intervals of values of $y$, arranging the 4-sets $S$ according to the order of $\theta_S(y)$, and then working from right-to-left to see whether the transitions are effective, i.e., whether $S$ contains two 3-sets which are vertices of the region under consideration.
For $0.7085<y<1.6$, the only change from the above order which causes a different transition is that $\theta_{1235}(y)$ is now greater than $\theta_{1567}(y)$, so the 1235 change takes place first, leading to region $H$ with vertices $125$, $235$, $345$, $157$, $567$, and $178$.

The most interesting point is $(6-2\sqrt7,6-2\sqrt7)\approx(.7085,.7085)$, which lies on all of $\theta_{1567}$, $\theta_{1678}$, $\theta_{1568}$,  $\theta_{1578}$, and $\theta_{5678}$.\footnote{To see this remarkable fact, recall that these five curves are obtained by replacing $y$ by $-y$ in the polynomials in (\ref{15}) and the paragraph preceding it. After doing this, let $y=x$. Each of the resulting polynomials equals $x^2-12x+8$ times a linear factor.} These five curves reverse their order at $y=6-2\sqrt7$. For $6-2\sqrt7<y<.715$,
$$\theta_{2345}(y)<\theta_{1578}(y)<\theta_{1678}(y)<\theta_{5678}(y)<\theta_{1567}(y)<\theta_{1568}(y)<\theta_{1234}(y)<\theta_{1235}(y),$$
which has the transitions described in the preceding paragraph, but for $.7045<y<6-2\sqrt7$,
$$\theta_{2345}(y)<\theta_{1568}(y)<\theta_{1567}(y)<\theta_{5678}(y)<\theta_{1678}(y)<\theta_{1578}(y)<\theta_{1234}(y)<\theta_{1235}(y),$$
which has a different order of transitions. Let $.7045<y<6-2\sqrt7$. After the 1235 change, the next one is 1578, leading to region $C$ with vertices $125$, $235$, $345$, $158$, $567$, and $578$. The next transition is due to 5678, leading to region $I$ with vertices $125$, $235$, $345$, $158$, $568$, and $678$. The next transition is due to 1568, which brings us into region $E$, with vertices  $125$, $235$, $345$, $168$, $156$, and $678$, which were already seen when considering larger values of $y$. Finally, a 2345 transition brings us into region $D$ as above.

The 2345 and 1568 curves intersect at $y\approx 0.7045$, so for $y<0.7045$, the 2345 transition precedes the 1568 transition, leading to region $B$ with vertices $125$, $234$, $245$, $158$, $568$, and $678$. 
For $y>.685$, there will be a 1568 transition into region $D$, but for $y<.685$, there is no 1568 transition since $\theta_{1568}(y)<0$ if $y<.685$.

This completes the description of the regions of the top half of quadrant $Q_1$ with constant $\bL$, described in terms of the Voronoi cells. Now we translate this description into one which has the cube's corner numbers at the leaves, which is the description given in Section \ref{stmtsec}, and is needed for giving permuted descriptions in other quadrants. In Figure \ref{top}, we show how the top half of the cut loci appear in terms of Voronoi cells, and list the regions in Figure \ref{figB} in which they appear.
Each edge leading to a leaf is a perpendicular bisector separating Voronoi cells $i$ and $i+1$ for some $i$ mod 8. For $i=1,2,3,4$, the corner point at the end of this bisector is 1, 5, 2, 6, respectively, as can be seen in Figure \ref{star}. The reader can check that this labeled diagram is consistent with  the $\bL$ in Figure \ref{figC}.

\bigskip
\begin{minipage}{6in}
\begin{fig}\label{top}

{\bf Top half of cut loci.}

\begin{center}

\begin{\tz}[scale=.5]
\draw (0,2) -- (0,3) -- (1,4) -- (2,4);
\draw (1,4) -- (1,5);
\draw (0,3) -- (-1,4) -- (-2,4);
\draw (-1,4) -- (-1,5);
\node at (-1,2.5) {$1$};
\node at (1,3) {$5$};
\node at (0,4) {$3$};
\node at (-1.5,4.5) {$2$};
\node at (1.5,4.5) {$4$};  
\node at (0,0) {$A,G$};
\node at (0,6) {$123,\ 135,\ 345$};
\draw (8,1) -- (8,3) -- (9,4) -- (10,4);
\draw (8,2) -- (6.5,2);
\draw (8,3) -- (7,4);
\draw (9,4) -- (9,5);
\node at (7.2,1.5) {$1$};
\node at (7.2,2.6) {$2$};
\node at (8,4) {$3$};
\node at (9.8,4.5) {$4$};
\node at (9.4,2.8) {$5$};
\node at (8,0) {$C,E,F,H,I$};
\node at (8,6) {$125,\ 235,\ 345$};
\draw (16,1) -- (16,4) -- (17,5);
\draw (16,2) -- (14.5,2);
\draw (16,3) -- (17.5,3);
\draw (16,4) -- (15,5);
\node at (15.2,1.5) {$1$};
\node at (15.2,3) {$2$};
\node at (16.7,2) {$5$};
\node at (16,4.9) {$3$};
\node at (16.7,3.8) {$4$};
\node at (16,0) {$B,D$};
\node at (16,6) {$125,\ 234,\ 245$};
\end{\tz}
\end{center}
\end{fig}
\end{minipage}
\bigskip

In Figure \ref{bottom}, we do the same thing for the bottom half of cut loci in the top half of $Q_1$.
The corner numbers at the ends of segments bounding Voronoi cells 5 and 6, 6 and 7, 7 and 8, and 8 and 1 are 7, 3, 8, and 4, respectively.

\bigskip
\begin{minipage}{6in}
\begin{fig}\label{bottom}

{\bf Bottom half of cut loci.}

\begin{center}

\begin{\tz}[scale=.45]
\draw (0,5) -- (0,4) -- (1,3) -- (2,3);
\draw (1,3) -- (1,2);
\draw (0,4) -- (-1,3) -- (-2,3);
\draw (-1,3) -- (-1,2);
\node at (0,2.3) {$7$};
\node at (1.5,2.2) {$6$};
\node at (-1.5,2.2) {$8$};
\node at (-1,4.5) {$1$};
\node at (1,4.5) {$5$};
\node at (0,0) {$A,H$};
\node at (0,6) {$157,\, 567,\, 178$};
\draw (7,5) -- (7,3) -- (8,2) -- (9,2);
\draw (7,4) -- (5.5,4);
\draw (7,3) -- (6,2);
\draw (8,2) -- (8,1);
\node at (6.3,4.7) {$1$};
\node at (8,3.6) {$5$};
\node at (8.7,1.2) {$6$};
\node at (7,1.3) {$7$};
\node at (6.2,3.3) {$8$};
\node at (7,0) {$C$};
\node at (7,6) {$158,\, 567,\, 578$};
\draw (14,5) -- (14,2) -- (15,1);
\draw (14,2) -- (13,1);
\draw (14,3) -- (15.5,3);
\draw (14,4) -- (12.5,4);
\node at (14,1) {$7$};
\node at (14.7,2.2) {$6$};
\node at (13.3,3) {$8$};
\node at (14.7,4) {$5$};
\node at (13.3,4.7) {$1$};
\node at (14,0) {$B,I$};
\node at (14,6) {$158,\, 568,\, 678$};
\draw (20,5) -- (20,2) -- (21,1);
\draw (20,3) -- (18.5,3);
\draw (20,4) -- (21.5,4);
\draw (20,2) -- (19,1);
\node at (20,1) {$7$};
\node at (20.7,3) {$6$};
\node at (20.7,4.7) {$5$};
\node at (19.3,4) {$1$};
\node at (19.3,2) {$8$};
\node at (20,0) {$D,E$};
\node at (20,6) {$156,\, 168,\, 678$};
\draw (27,5) -- (27,3) -- (26,2) -- (25,2);
\draw (27,4) -- (28.5,4);
\draw (27,3) -- (28,2);
\draw (26,2) -- (26,1);
\node at (25.3,1) {$8$};
\node at (27,2) {$7$};
\node at (26.2,3) {$1$};
\node at (27.7,4.7) {$5$};
\node at (27.7,3.2) {$6$};
\node at (27,0) {$F,G$};
\node at (27,6) {$156,\, 167,\, 178$};
\end{\tz}
\end{center}
\end{fig}
\end{minipage}
\bigskip

A similar discussion could be made for the $\bL$ associated to the curves. But it is easier and more insightful to note how the $\bL$ for a curve bounding two regions is obtained from that of each of the two regions by collapsing a segment in which the two regions differ. For example, the $\bL$ for the $BD$ curve in Figure \ref{curveL} is obtained from those in region $B$ or $D$ in Figure \ref{figC} by collapsing the segment connecting the edges leading to corner points 4 and 7. Similarly, the $\bL$ for points of intersection of two curves is obtained by collapsing a segment in the $\bL$ of each. For example, the $\bL$ for point $BDEI$ in Figure \ref{pts} is obtained from those of curves $BD$ and $EI$ in Figure \ref{curveL} by collapsing in each the highest vertical interval.

The $\bL$'s in Figures \ref{two} and \ref{final} are different from those seen previously in that they have a corner point labeling a degree-2 vertex. In these cases, the choice of cuts is not unique, but, of course, the cut locus does not depend on the choice. We comment briefly on the $\bL$ in these cases.

If $P$ is on the left edge of the cube, the $\bL$ is as seen in Figure \ref{Cube}. If $P$ is at a corner point of the cube, the cut locus consists of segments from the corner point opposite $P$ to each of the other corner points. If $P$ is at the center of a face $F$, the cut locus consists of the diagonals of the opposite face $F^\text{op}$ and the four edges of the cube connecting $F$ and $F^\text{op}$.

If $P=(x,4-x)$ with $0<x<4$ is on the half-diagonal, then $\perp_{4,5}$ is the line $w=4$, which intersects the point in the star-unfolding corresponding to corner point 2. Then the short segment connecting the point $\pi_{3,4,5}$ in Figure \ref{cutloc} with the point labeled 2 will have collapsed to a point. In the $A$ diagram in Figure \ref{figC}, this is the collapse of the vertical segment from the point labeled 2. This can be seen in terms of the Voronoi cells in the $A$-part of Figure \ref{top}. A similar thing happens to the vertical segment leading to the point labeled 8, as the equation of $\perp_{7,8}$ is $v=-8$. Also the lines $\perp_{1,7}$, $\perp_{3,5}$, $\perp_{1,5}$, $\perp_{1,3}$, and $\perp_{5,7}$ all intersect at $(v,w)=(-x,4-x)$.

In Section \ref{stmtsec}, we discussed how a permutation $\tau$ (resp.~$\sigma$) applied to corner points yields $\bL$ in the vertical flip (resp.~90-degree clockwise rotation) of a region or curve. Here we give a brief explanation of the reason for that. Such a motion applied to a point $P$ in the region has the same effect on geodesics from $P$, and hence on $L_P$. Referring to Figure \ref{cuts}, we see that, for example, the corner point $8$ in $L_P$ will be replaced by 5 (resp.~7), which expands to the asserted permutations.

\section{No other transitions}\label{endsec}
In this section, we present a proof that there are no regions other than those described earlier. 

Suppose $\bL_{P_0}\ne \bL_{P_1}$. Let $P(t)=(1-t)P_0+tP_1$, and, for any 3-subset $S$ of $[8]$, let $\pi_S(t)=\pi_S(P(t))$, a path in the $vw$-plane. For each $S$ such that $\pi_S(0)$ is a vertex of $L_{P_0}$, let
$$t_0(S)=\sup\{t\in[0,1]: \pi_S(t')\text{ is a vertex of }L_{P(t')}\ \forall t'<t\},$$
and let
$$t_0=\min\{t_0(S):\pi_S(0)\text{ is a vertex of }L_{P_0}\}.$$
Finally, let $S=\{\a,\b,\g\}$ satisfy $t_0(S)=t_0$. The first transition in moving from $P_0$ to $P_1$ will  involve $\pi_S(P(t_0))$.

\begin{prop} There exists $\d\in[8]-S$ and a decomposition of $S$ as $\{\b\}\cup\{\a,\g\}$ such that $\pi_{\{\a,\g,\d\}}(0)$ is a vertex of $L_{P_0}$, and $\pi_S(t_0)=\pi_{\{\a,\g,\d\}}(t_0)$ is a common vertex of $L_{P(t_0)}$.\end{prop}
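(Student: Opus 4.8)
The plan is to work entirely with the Voronoi/Delaunay description from Section \ref{background}: $\pi_S(t)$ is the circumcenter of the three points $\{P_\mu(t):\mu\in S\}$, and it is a degree-3 vertex of $L_{P(t)}$ precisely when the open disk bounded by that circumcircle contains no other $P_\eps(t)$ --- equivalently $d(\pi_S(t),P_\eps(t))\ge d(\pi_S(t),P_\mu(t))$ for all $\eps\notin S$ and $\mu\in S$. This is immediate from the definition of the cells $C_\a$ and of $L_P$ as the set of points lying in more than one cell. I would record this empty-circle criterion as the single geometric fact driving the whole argument.

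First I would analyze the instant $t_0$. For $t<t_0$ the criterion holds for $S=\{\a,\b,\g\}$, so writing $\rho(t)$ for the circumradius, the functions $g_\eps(t)=d(\pi_S(t),P_\eps(t))^2-\rho(t)^2$ satisfy $g_\eps(t)\ge0$ for every $\eps\notin S$. By the definition of $t_0(S)=t_0$ as a supremum, $\pi_S$ fails the criterion for $t$ just above $t_0$, so some $g_\d$ becomes negative there; since $g_\d\ge0$ on $[0,t_0)$ and $g_\d$ is continuous (the $P_\a(t)$ are affine in $t$ and $\pi_S(t)$ moves continuously while the three points stay non-collinear), we get $g_\d(t_0)=0$. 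Hence $P_\d(t_0)$ lies exactly on the circumcircle of $P_\a(t_0),P_\b(t_0),P_\g(t_0)$, i.e.\ the four points are concyclic with common center $\pi^{*}:=\pi_S(t_0)$. This forces the four triple-circumcenters to coincide at $t_0$, namely $\pi_{\a\b\g}(t_0)=\pi_{\a\b\d}(t_0)=\pi_{\a\g\d}(t_0)=\pi_{\b\g\d}(t_0)=\pi^{*}$, and, since still no $P_\eps$ is strictly inside the common circle, $\pi^{*}$ is a (degenerate, degree-4) vertex of $L_{P(t_0)}$.

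Next I would pin down the decomposition and the partner vertex by reading off the edge flip of Figure \ref{trans}. Order the four concyclic points cyclically; for $t$ slightly below $t_0$ the degree-3 vertex $\pi_S(t)$ has edges along $\perp_{\a,\b}$, $\perp_{\b,\g}$, and $\perp_{\a,\g}$, and exactly one of these three edges shrinks to $\pi^{*}$ as $t\to t_0^-$ (the other two survive as edges at the degree-4 vertex). I label so that $\{\a,\g\}$ are the endpoints of the collapsing bisector and $\b$ is the remaining element of $S$; this is the required splitting $S=\{\b\}\cup\{\a,\g\}$, and $\d$ is the cocircular index found above. The far endpoint of the collapsing edge is itself a degree-3 vertex converging to $\pi^{*}$, so by uniqueness of circumcenters it must equal $\pi_{\a,\g,\d}(t)$; thus $\pi_{\a,\g,\d}$ is a vertex for $t$ just below $t_0$, and $\pi_S(t_0)=\pi_{\a,\g,\d}(t_0)=\pi^{*}$ is their common vertex, which gives the second assertion.

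It remains to propagate ``$\pi_{\a,\g,\d}$ is a vertex'' from a left neighborhood of $t_0$ down to $t=0$, and this is the step I expect to be the main obstacle, since the definition of $t_0$ only monitors the disappearance of vertices of $L_{P_0}$, not the possible appearance of new ones. The plan is to exploit that $t_0$ is the \emph{first} transition: by the empty-circle criterion every change of the cut-locus combinatorics on $(0,t_0)$ is a cocircularity event, and such an event is a $2\!\leftrightarrow\!2$ edge flip that simultaneously removes two Delaunay triangles. Were one to occur at some $t'\in(0,t_0)$, the structure would be constant on $[0,t')$ and the two removed triangles would be vertices of $L_{P_0}$ disappearing before $t_0$, contradicting the minimality defining $t_0$. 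Hence the set of degree-3 vertices is constant on $[0,t_0)$, so $\pi_{\a,\g,\d}(t)$, being a vertex near $t_0^-$, is a vertex throughout $[0,t_0)$ and, by closedness of the criterion, at $t=0$. Making the ``every transition is a flip'' claim fully rigorous is exactly what interlocks with the remainder of Section \ref{endsec}, so I would cross-reference it there rather than reprove it here.
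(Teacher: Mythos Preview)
Your argument follows the same route as the paper: detect the cocircularity of $P_\a,P_\b,P_\g,P_\d$ at $t_0$ via the empty-circle criterion, read off the decomposition $S=\{\b\}\cup\{\a,\g\}$ from the local picture, and then invoke the minimality of $t_0$ to carry the partner vertex $\pi_{\a,\g,\d}$ back to $t=0$. Your identification of $\b$ as the element opposite the collapsing edge $\perp_{\a,\g}$ is in fact more careful than the paper's, which writes ``without loss of generality $\eta=\b$'' after observing that $\pi_S$ crosses $\perp_{\d,\eta}$; since $\pi_S(t_0)$ lies on all three bisectors $\perp_{\d,\a},\perp_{\d,\b},\perp_{\d,\g}$, that phrase is really shorthand for the relabeling you make explicit via the Delaunay flip.

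The only place to tighten is your last paragraph. Cross-referencing the remainder of Section~\ref{endsec} for ``every transition is a flip'' is circular: Theorem~6.2 \emph{uses} this proposition, not conversely. You already have the non-circular version in hand and do not need the flip statement at all. By minimality of $t_0$, each of the (generically six) degree-$3$ vertices of $L_{P_0}$ persists as a vertex of $L_{P(t)}$ for every $t\in[0,t_0)$; but a tree with $8$ leaves has at most $6$ interior (trivalent) vertices, so these are \emph{all} of them and the combinatorial type is constant on $[0,t_0)$. Since you showed $\pi_{\a,\g,\d}(t)$ is a vertex for $t$ just below $t_0$, it is one of the six, hence a vertex at $t=0$. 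This is what the paper's terse ``by minimality of $t_0$'' is encoding.
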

\begin{proof} There exists $\eps>0$ and $\d\in [8]-S$ such that for $t$ in the interval $(t_0,t_0+\eps)$, $\pi_S(t)$ is closer to $P_\d$ than it is to $P_\a$, $P_\b$, and $P_\g$. The path $\pi_S$ crosses $\perp_{\d,\eta}\!(P(t_0))$ for some $\eta\in[8]$, and $\eta$ must equal $\a$, $\b$, or $\g$, since for $t$ in some interval $(t_0-\eps',t_0)$, $\pi_S(t)$ is closer to $P_\a$, $P_\b$, and $P_\g$ than it is to any other $P_\eta$. Without loss of generality, say $\eta=\b$. Then $\pi_S(t_0)$ intersects $\perp_{\b,\d}\!(P(t_0))$, and so all six perpendicular bisectors from $\{\a,\b,\g,\d\}$ intersect in $L_{P(t_0)}$. By minimality of $t_0$, since $\pi_{\{\a,\g,\d\}}(t_0)\in L_{P(t_0)}$, we conclude that $\pi_{\{ \a,\g,\d\}}(0)\in L_{P_0}$. See Figure \ref{trans} for a depiction of this transition.\end{proof}

\begin{thm} There are no transitions except those claimed earlier in the manuscript.\end{thm}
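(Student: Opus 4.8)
The plan is to bootstrap from the Proposition. It shows that any single transition is of the Figure~\ref{trans} type, hence is governed by a $4$-set $\{\a,\b,\g,\d\}$ two of whose $3$-subsets, sharing a common pair, are degree-$3$ vertices of $L_P$ on the near side of the transition. Applying the Proposition to the restriction of $P(t)$ to each subinterval lying strictly between consecutive transitions shows that \emph{every} transition, not merely the first, has this form. So it suffices to enumerate the $4$-sets that can actually fire and to check that each produces only transitions already catalogued in Section~\ref{pfsec}.

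The organizing device is a combinatorial invariant: every degree-$3$ vertex $\pi_S$ occurring in $Q_1$ satisfies $S\subseteq\{1,2,3,4,5\}$ (a \emph{top} vertex) or $S\subseteq\{1,5,6,7,8\}$ (a \emph{bottom} vertex). I would establish this by induction on the number of curves crossed, the base case being the region $\mathcal R$, where it holds by the explicit list $123,135,345,178,157,567$. Granting the invariant, the two $3$-subsets demanded by the Proposition are either both top, both bottom, or one of each; in the mixed case their shared pair lies in $\{1,2,3,4,5\}\cap\{1,5,6,7,8\}=\{1,5\}$, so the firing $4$-set must be $\{1,5,\mu,\nu\}$ with $\mu\in\{2,3\}$ and $\nu\in\{6,7,8\}$. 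Thus a firing $4$-set is contained in $\{1,2,3,4,5\}$, contained in $\{1,5,6,7,8\}$, or contains $\{1,5\}$; every $4$-set of neither form has at most one top-or-bottom $3$-subset and cannot fire. The first two cases are exactly the $\binom{5}{4}+\binom{5}{4}=10$ sets whose transition polynomials were already computed.

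The remaining and, I expect, decisive step is to rule out the ``cross'' sets $\{1,5,\mu,\nu\}$; this is precisely the claim that the top and bottom halves may be analyzed separately. The candidate vertices $\pi_{\{1,5,\mu\}}$ and $\pi_{\{1,5,\nu\}}$ both lie on $\perp_{1,5}$ (the line $v=-x$) and are joined by the central spine of $L_P$, so a cross transition would force this spine to collapse, i.e.\ $P_1,P_5,P_\mu,P_\nu$ to become concyclic. Since $P_\mu$ ($\mu\in\{2,3\}$) has $w$-coordinate at least $12$ and $P_\nu$ ($\nu\in\{6,7,8\}$) has $w$-coordinate at most $-8$, while $P_1,P_5$ sit in the band $-4\le w\le 4$, the top junction should remain strictly above the bottom junction along $\perp_{1,5}$. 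Concretely I would have \texttt{Maple} confirm that each of the six concyclicity polynomials in $x,y$ is nonvanishing on $Q_1$ (equivalently that $w(\pi_{\{1,5,\mu\}})>w(\pi_{\{1,5,\nu\}})$ throughout), so the spine never degenerates and no cross transition occurs. This is simultaneously what preserves the invariant---each catalogued transition alters only top vertices or only bottom vertices---so the induction closes.

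Finally I would assemble the pieces. The interval-by-interval ordering of the curves $\theta_S(y)$ in Section~\ref{pfsec} exhausts $0\le y<4$; the reflection $y\mapsto-y$ together with $\tau$ handles $-4<y\le 0$; and $\sigma,\sigma^2,\sigma^3$ carry $Q_1$ onto the other three quadrants. The degenerate configurations on the edges, half-diagonals, center, and corners are the separately described limits of Figures~\ref{two} and~\ref{final}. Since within each of the top and bottom families all $\binom{5}{4}$ four-sets were treated, and the cross four-sets have been excluded, every transition curve in $Q_1$ is one of the catalogued loci; hence no transition other than those listed can occur, which is the assertion of the theorem.
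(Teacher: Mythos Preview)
Your approach is essentially the paper's own: both reduce, via the Proposition, to classifying possible firing $4$-sets according to how they meet $\{1,5\}$ (your top/bottom dichotomy is exactly the paper's $S_1\cup S_2$ versus $S_2\cup S_3$ split), handle the ten ``pure'' $4$-subsets of $\{1,2,3,4,5\}$ or $\{1,5,6,7,8\}$ via the already-computed transition curves, and dispose of the ``cross'' sets $\{1,5,\mu,\nu\}$ by a {\tt Maple} nonvanishing check on $Q_1$. Your explicit induction on the number of curves crossed is a clean way to phrase what the paper leaves implicit.

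One slip to fix: from the top/bottom invariant alone the mixed-case top $3$-subset is $\{1,5,\mu\}$ with $\mu\in\{2,3,4\}$, not $\mu\in\{2,3\}$, so there are nine concyclicity polynomials to verify, not six. (Restricting to $\mu\in\{2,3\}$ would require the stronger fact that $\pi_{\{1,4,5\}}$ is never a vertex, which your stated invariant does not give.) The paper checks all nine, phrased equivalently as showing $\pi_{\{1,\mu,\nu\}}$ never lies on $\perp_{1,5}$ for $\mu\in\{2,3,4\}$, $\nu\in\{6,7,8\}$.
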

\begin{proof} Let $S_1=\{2,3,4\}$, $S_2=\{1,5\}$, and $S_3=\{6,7,8\}$. Recall that all of our asserted regions in $Q_1$ have $\bL$ with three vertices from $S_1\cup S_2$ and three from $S_2\cup S_3$. If $\bL_{P_0}\ne \bL_{P_1}$ with $\bL_{P_0}$ in one of our regions, and $t_0$ is as above, so that we are considering the first transition in moving from $P_0$ to $P_1$, then the set $\{\a, \b, \g,\d\}$ involved in the transition must either contain $S_2$ or else equal one of $\{1,2,3,4\}$, $\{2,3,4,5\}$, $\{1,6,7,8\}$, or $\{5,6,7,8\}$. This is true since sets with elements of type $S_1S_2S_3S_3$, $S_1S_1S_2S_3$, $S_1S_1S_3S_3$, $S_1S_1S_1S_3$, or $S_1S_3S_3S_3$ do not contain two 3-subsets of the type of the vertices of $L_{P_0}$. In our earlier determination of the regions in $Q_1$, we considered the four specific sets listed above (containing a single 1 or 5), and also all sets with elements of type $S_1S_1S_2S_2$ and $S_2S_2S_3S_3$. It remains to consider $\{1,5,\a,\b\}$ with $\a\in S_1$ and $\b\in S_3$. If $P\in Q_1$, we use {\tt Maple}, similarly to (\ref{15}), to see that, for $\a\in S_1$ and $\b\in S_3$, $\pi_{\{1,\a,\b\}}(P)$ does not lie on $\perp_{1,5}\!(P)$. Thus there can be no transitions other than the ones described earlier in the paper.

We explain briefly the {\tt Maple} work that led to this conclusion. We follow steps that led to (\ref{15}) but using one of $\{\b,\g\}$ in $S_1$ and one in $S_3$. We obtain equations similar to (\ref{15}). We plot them and find that there are no solutions satisfying $-4<y<4$, $0<x<4-|y|$. 
\end{proof}

\def\line{\rule{.6in}{.6pt}}


\begin{thebibliography}{99}
\bibitem{star97}
P. Agarwal, B. Arnov, J. O'Rourke, and C. Schevon,
{\em Star Unfolding of a Polytope with Applications},
  SIAM J. Comput.,
  {\bf 26}
 (1997) 
  1689--1713.
  
 


\bibitem{cut21} J. O'Rourke and C.Vilcu,
  {\em Cut Locus Realizations on Convex Polyhedra},
  CCCG (2021), {\tt arXiv 2102.11097}.

\bibitem{david}
D. Recio-Mitter,
{\em Geodesic complexity of motion planning},
J. Appl. Comput. Topol, {\bf 5} (2021) 141--178.

%\bibitem{ss86}
%M. Sharir and A. Schorr,
% {\em On shortest paths in polyhedral spaces},
%SIAM J. Comput., {\bf 15} (1986) 193--215.
\end{thebibliography}
\end{document}